\journal{Linear and Multilinear Algebra}
\newtheorem{thm}{Theorem}[section]
\newtheorem{lem}[thm]{Lemma}
\newtheorem{cor}[thm]{Corollary}
\theoremstyle{definition}
\newtheorem{ex}[thm]{Example}
\theoremstyle{remark}
\newcommand{\R}{\mathbb{R}}  
\newcommand{\N}{\mathbb{N}}  
\newcommand{\C}{\mathbb{C}} 
\newcommand{\HM}{\mathcal{H}} 
\newcommand{\s}{\{s_k\}_{k\in \N_0}} 
\newcommand{\st}{\{\tilde{s}_k\}_{k\in \N_0}} 
\newcommand{\subs}{\{\tilde{s_k}\}_{k\in \N_0}} 
\newcommand{\ts}{\{t_k\}_{k\in \N_0}} 
\newcommand{\intR}{\int_{-\infty}^{\infty}} 
\begin{document}

\begin{frontmatter}

\author[add1]{Saroj Aryal}
\ead{saroj.aryal@msubillings.edu}
\author[add2]{Hayoung Choi\corref{cor}}
\ead{hchoi2@uwyo.edu}
\author[add2]{Farhad Jafari}
\ead{fjafari@uwyo.edu}

\cortext[cor]{Corresponding author}

\address[add1]{Department of Mathematics, Montana State University, Billings, MT 59101, USA}
\address[add2]{Department of Mathematics, University of Wyoming, Laramie, WY 82071, USA}

\title{Hamburger moment sequences and their moment subsequences}

\begin{abstract}
In this paper a connection between Hamburger moment sequences and their moment subsequences is given and the determinacy of these problems are related.
\end{abstract}

\begin{keyword}
moment problems, Hankel matrices, Hamburger moment completions.
\MSC[2010] Primary 30E05; Secondary 44A60, 42C05.
\end{keyword}
\end{frontmatter}

\section{Introduction and Main Results}

Given a sequence of real numbers $\s$ a necessary and sufficient condition that there exists a non-decreasing function $\sigma $ on $\R$
such that
\begin{equation}\label{Hamburger_moment}
s_{k}=\int_{-\infty}^{\infty} x^k \mathrm{d}\sigma(x) \quad \textup{for all } k\in \N_{0}
\end{equation}
is that the quadratic forms
\begin{equation*}
\sum_{i,j=0}^n x_i x_j s_{i+j} \geq 0 \quad \forall n\in
\N_0,~ x_0,\ldots,x_n \in \R.
\end{equation*}
This is equivalent to the Hankel matrices
\begin{equation}
\HM_n:=(s_{i+j})_{i,j=0}^{n}
\end{equation}
being positive semidefinite for all $n \in \N_{0}$.

The sequences with representation \eqref{Hamburger_moment} are called \emph{(Hamburger) moment sequences} or \emph{positive sequences}. The function $\sigma$ is called a \emph{moment solution} of the sequence. Positive sequences consist of two mutually disjoint sequences. A sequence is called \emph{positive definite} if the Hankel matrix $\HM_n$ is positive definite for all $n\in \N_0$; otherwise it is called \emph{positive semidefinite}.
It is worthwhile to point out that there is a significant difference between definite and semidefinite character of moment sequences in their integral representations \cite{BS15, book:widder}.

Two nondecreasing functions are not considered to be distinct if their difference is a constant at all the points of continuity of the difference. 
A Hamburger moment sequence with no more than one distinct nondecreasing function $\sigma$ in \eqref{Hamburger_moment} is called \emph{determinate}, and \textit{indeterminate} otherwise.
Berg, Chen and Ismail \cite{BCI02} proved that a moment sequence is determinate if and only if $\lambda_n \rightarrow 0$ for $n \rightarrow 0$, where
$\lambda_n$ is the smallest eigenvalue of $\HM_n$.
Moreover, for the indeterminate case a positive lower bound for the smallest eigenvalue of the Hankel matrices was explicitly found.

A \emph{partial sequence} is a sequence in which some terms are specified, while the remaining terms are unspecified and may be treated as free real variables.
A \emph{partial positive (semi)definite sequence} is a partial sequence if each of the fully specified principal submatrices of its Hankel matrix $\HM_n$ is positive (semi)definite for all $n \in \N_{0}$. Since every principal submatrix of a positive definite matrix is positive definite, it is trivial that a partial sequence has a positive completion only if it is a partial positive sequence.
A \emph{Hamburger moment completion} (or a \emph{positive completion}) of a partial sequence is a specific choice of values for the unspecified terms resulting in a positive (semi)definite sequence. Note that a partial positive definite sequence can have either a positive definite completion or a positive semidefinite completion, and possibly both.
The \emph{Hamburger moment completion problem} asks whether a given partial sequence has a Hamburger moment completion. 
A \emph{pattern} of a partial sequence is the set of positions of the specified entries. Denote the pattern of a partial sequence by the set of positive integers
\begin{equation*}
P=\{k \in \N_0 ~:~ s_k \text{ is specified}\}.
\end{equation*}
We say that a pattern $P$ is \emph{positive (semi)definite completable} if every partial positive (semi)definite sequence with pattern $P$ has a positive (semi)definite completion.

There are two main methods for dealing with partial moment sequences: (i) Perturbation or modification, and (ii) Completions. Perturbation or modification investigates the stability of positivity of a moment sequence to perturbations by arbitrary sequences, thus informing about the range of possible values allowable for the missing entries. Completion is based on reconstruction from fully specified subsequences of a partial sequence while retaining positivity. Perturbation of a moment sequence is closely related to the orthogonal polynomials the moment sequence generates. Gautschi \cite{gaustchi} studies sensitivity of orthogonal polynomials to perturbations. Readers are advised to refer to \cite{beckermann} and \cite{sunmikim}  for examples of modified moments. Choi and Jafari \cite{CJ15} give solutions to the Hamburger moment completion problem. In this paper, a comprehensive study of completable partial positive sequences and their patterns is given and perturbations of moment sequences by arbitrary sequences is studied.

In Section 2 of this paper, a series of results are given that characterize the moment subsequences of Hamburger moment sequences and give a description of completable patterns from partial sequences.  Some of the results and examples on subsequences have appeared in the monograph of Berg, Christensen and Ressel \cite{BCR} and the extensive paper of Stochel and Szafraniec \cite{SS} and elsewhere. While not new, alternative elementary proofs are presented here to set the stage to discuss the subsequence problem and compeletable patterns. In Section 3, a relationship between moment and submoment sequences is derived and the determinacy of the two problems are compared. Finally, in Section 4, stability of the moment problem with respect to perturbation by an arbitrary sequence is discussed and a criterion for such perturbations is given.

\section{Moment subsequences and completions}
Let $\s$ be a moment sequence. If its subsequence $\st$ is also a moment sequence, it is called a \emph{submoment sequence}.

\begin{ex}\label{ex:Hilbert}
Consider the sequence $\s$ given by
\begin{equation}\label{ex1}
s_k = \frac{1}{k+1}.
\end{equation}
Since the Hankel matrices $\HM_n$ are positive definite for all $n\in \N_0$, the sequence is a Hamburger moment sequence. 
Indeed, each of the Hankel matrices $\HM_n$ is a Hilbert matrix, which is totally positive (all of its minors are positive). Thus there exists a non-decreasing function ${\sigma} $ on $\R$ such that
\begin{equation}\label{ex1:submoment}
\frac{1}{k+1}=\int_{-\infty}^{\infty} x^k \mathrm{d}{\sigma}(x) \quad \textup{for all } k\in \N_0.
\end{equation}
Let $\epsilon>0$ and choose $m\in \N_0$ such that $ s_{2m}< \epsilon$. Suppose that only the term $s_{2m}$ is missing and the remaining terms are specified.
Setting $s_{2m}=0$ the Hankel matrix $\HM_{m}$ is not positive semidefinite. Thus, arbitrarily small perturbations of a positive sequence ejects one from the cone of positive sequences. We will return to perturbation of positive sequences in Section 4.
\end{ex}
For positive semidefinite completable patterns, arithmetic progression patterns play a crucial role. In \cite{CJ15} Choi and Jafari show that arithmetic progression patterns guarantee there exists completions.
\begin{thm}\label{thm:mainresult}
If the pattern $P = d\N_0+\ell_0$ for some $d\in \N$ and $\ell_0 \in 2\N_{0}$, then $P$ is positive semidefinite completable. 
\end{thm}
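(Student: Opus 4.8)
The plan is to reduce the completion problem for the pattern $P=d\N_0+\ell_0$ to the ordinary Hamburger/Stieltjes problem for the specified subsequence, and then to transport a representing measure back through the monomial change of variables $x=y^{d}$. First I would isolate the specified data. Writing $t_m:=s_{\ell_0+md}$, the hypothesis $\ell_0\in 2\N_0$ guarantees that $\ell_0/2\in\N_0$, so the index set $\{\ell_0/2+pd:p\in\N_0\}$ has all of its pairwise sums equal to $\ell_0+(p+q)d\in P$. The principal submatrix of $\HM_n$ supported on this set is therefore fully specified and equals the Hankel matrix $(t_{p+q})_{p,q}$; since the partial sequence is positive semidefinite, every such submatrix is positive semidefinite, so $\{t_m\}_{m\in\N_0}$ is a Hamburger moment sequence. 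When $d$ is even, $(\ell_0+d)/2\in\N_0$ as well, and the shifted index set $\{(\ell_0+d)/2+pd:p\in\N_0\}$ produces the fully specified shifted Hankel matrix $(t_{p+q+1})_{p,q}$; its positive semidefiniteness upgrades $\{t_m\}$ to a Stieltjes moment sequence, i.e. one admitting a representing measure on $[0,\infty)$.

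Next I would fix a representing measure $\tau$ for $\{t_m\}$, supported on $\R$ when $d$ is odd and on $[0,\infty)$ when $d$ is even, and build a measure $\mu$ for the full sequence by pushing the weighted measure $|x|^{-\ell_0/d}\,\mathrm d\tau(x)$ forward under the root map $r(x)=\operatorname{sign}(x)\,|x|^{1/d}$ (for $d$ odd) or $r(x)=x^{1/d}$ (for $d$ even), so that $r(x)^d=x$. The role of the evenness of $\ell_0$ is that $r(x)^{\ell_0}=|x|^{\ell_0/d}$, whence for every $m$ one computes $\int y^{\ell_0+md}\,\mathrm d\mu(y)=\int r(x)^{\ell_0}\,r(x)^{md}\,|x|^{-\ell_0/d}\,\mathrm d\tau(x)=\int x^{m}\,\mathrm d\tau(x)=t_m$, the weights cancelling exactly. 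Setting $s_k:=\int y^{k}\,\mathrm d\mu(y)$ then yields a genuine Hamburger moment sequence agreeing with the prescribed values on $P$, which is precisely a positive semidefinite completion.

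The step I expect to be the main obstacle is showing that $\mu$ is a bona fide finite measure possessing all moments, which is entirely a question of the behaviour of $\tau$ at the origin, where the weight $y^{\ell_0}$ degenerates. Concretely the total mass is $\mu(\R)=\int|x|^{-\ell_0/d}\,\mathrm d\tau(x)$ and the $k$-th absolute moment is $\int|x|^{(k-\ell_0)/d}\,\mathrm d\tau(x)$; for $k\ge \ell_0$ the exponent is nonnegative and these are dominated by integer moments of $\tau$, but for $0\le k<\ell_0$ one must control a negative power of $|x|$ against $\tau$ near $0$. I would handle this by decomposing $\tau$ into its restriction to a neighbourhood of the origin and its restriction to the complement: the latter contributes no difficulty, while for the former one needs a representing measure of $\{t_m\}$ whose mass near $0$ is integrated finitely against $|x|^{-\ell_0/d}$. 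Making this selection rigorous, and in particular ruling out or absorbing any atom of the representing measure at the origin, is the delicate point on which the argument turns, and it is exactly here that the hypothesis $\ell_0\in 2\N_0$ must do the essential work.
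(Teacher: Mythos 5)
Your first two steps are sound and, for $\ell_0=0$, they already constitute a complete proof: the index set $\{\ell_0/2+pd\}$ does exhibit $(t_{p+q})_{p,q}$ as a fully specified principal submatrix, so $\{t_m\}$ is a Hamburger (and, for $d$ even, a Stieltjes) moment sequence, and pushing $\tau$ forward under the $d$-th root map produces a representing measure for a completion. Note also that the paper itself contains no proof of Theorem~\ref{thm:mainresult}; it is quoted from \cite{CJ15}, so there is no internal argument to compare yours against.

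The step you defer for $\ell_0\geq 2$ --- finiteness of $\int_{-\infty}^{\infty}|x|^{-\ell_0/d}\,\mathrm{d}\tau(x)$ near the origin --- is, however, not a removable technicality, and the hypothesis $\ell_0\in 2\N_0$ cannot supply it: evenness only buys you $r(x)^{\ell_0}=|x|^{\ell_0/d}$ and the integrality of $\ell_0/2$; it says nothing about the mass of $\tau$ at $0$. Take $t_0=1$ and $t_m=0$ for $m\geq 1$. This is a determinate moment sequence whose only representing measure is $\tau=\delta_0$, so your weighted pushforward is not a finite measure and no better choice of $\tau$ is available. Worse, the corresponding partial sequence $s_{\ell_0}=1$, $s_{\ell_0+md}=0$ for $m\geq1$ is partial positive semidefinite in the paper's sense: every fully specified principal submatrix of $\HM_n$ has at most one nonzero entry, namely a $1$ in the diagonal position $i=j=\ell_0/2$, because an off-diagonal entry equal to $t_0$ would require indices $i\neq j$ with $i+j=\ell_0$ and $2i,2j\in P$, forcing $\min(2i,2j)<\ell_0$, which lies outside $P$. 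Yet this partial sequence admits no positive semidefinite completion at all: any completion would be a moment sequence with $s_{2N}=0$ for some even $2N\in P$ with $2N>\ell_0$ (take $2N=\ell_0+d$ or $\ell_0+2d$ according to the parity of $d$), which forces the representing measure to be a point mass at the origin and hence $s_{\ell_0}=0$, contradicting $s_{\ell_0}=1$. So the obstruction you flagged is exactly where the argument breaks down, and under the paper's stated definition of a partial positive semidefinite sequence it appears to break the statement itself for every $\ell_0\geq 2$; your construction is complete only in the case $\ell_0=0$, and for $\ell_0\geq 2$ no repair is possible without additional hypotheses controlling the specified data (equivalently, the behaviour of $\tau$ at the origin).
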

The following corollary shows that if $\ell_0 = 0 $, then the partial positive pattern $P$ is actually positive definite completable. 

\begin{cor}\label{cor:mainresult2}
If the pattern $P = d\N_0$ for some $d\in \N_0$, then $P$ is positive definite completable.
\end{cor}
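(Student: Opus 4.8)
The plan is to reduce the statement to a change of variables on representing measures, using the Hankel structure to locate the specified data inside ordinary (and, when needed, shifted) moment matrices. Write $\tilde s_m := s_{dm}$ for the specified subsequence and take $d\ge 1$ (the case $d=0$ is trivial, since then only $s_0$ is prescribed and any positive definite sequence with that value serves). By Theorem~\ref{thm:mainresult} the pattern $P=d\N_0$ is already positive semidefinite completable, so the real content here is to upgrade ``semidefinite'' to ``definite''; I would do this by exhibiting an explicit representing measure rather than by inspecting the completion produced there. The first step is to read off from the hypothesis that $\st$ is a positive definite Hamburger moment sequence: the principal submatrix of $\HM_{nd}$ on the rows and columns indexed by $0,d,2d,\ldots,nd$ has $(a,b)$ entry $s_{d(a+b)}=\tilde s_{a+b}$, is fully specified (every such index lies in $P$), and is therefore positive definite for every $n$. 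Hence $\st$ admits a representing measure $\mu$ on $\R$ with all moments finite, and $\mu$ has infinite support precisely because $\st$ is positive definite.

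Next I would split on the parity of $d$, since the substitution $x\mapsto x^d$ behaves differently. If $d$ is odd, the map $\psi(y)=\operatorname{sgn}(y)\,|y|^{1/d}$ is a bijection of $\R$ satisfying $\psi(y)^{dj}=y^j$, and I set $\sigma:=\psi_\ast\mu$. Then $\int x^{dj}\,\mathrm{d}\sigma=\int y^j\,\mathrm{d}\mu=\tilde s_j=s_{dj}$, so declaring the unspecified terms to be $s_k:=\int x^k\,\mathrm{d}\sigma$ produces a Hamburger moment sequence agreeing with the data on $P$. Finiteness of all these moments follows from $\int|y|^{k/d}\,\mathrm{d}\mu\le \mu(\R)+\int|y|^{k}\,\mathrm{d}\mu<\infty$, and since $\psi$ is injective, $\sigma$ inherits the infinite support of $\mu$, whence every Hankel matrix of the completed sequence is positive definite.

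The even $d$ case is where I expect the actual work, because $x\mapsto x^d$ then maps $\R$ onto $[0,\infty)$, so a real $d$-th root exists only if $\mu$ can be placed on $[0,\infty)$, i.e. only if $\st$ is a Stieltjes sequence. The key observation is that for even $d$ the hypothesis delivers exactly this extra positivity: the principal submatrix of $\HM$ on the rows and columns indexed by $\tfrac d2,\ \tfrac d2+d,\ \tfrac d2+2d,\ldots$ (with $\tfrac d2\in\N$ precisely because $d$ is even) has $(a,b)$ entry $s_{d(a+b)+d}=\tilde s_{a+b+1}$, is again fully specified, and is positive definite. Combined with the first step, this forces both $(\tilde s_{a+b})$ and $(\tilde s_{a+b+1})$ to be positive definite, so $\st$ is a positive definite Stieltjes sequence and $\mu$ may be chosen on $[0,\infty)$. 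Taking $\psi(y)=y^{1/d}$ on $[0,\infty)$ and $\sigma:=\psi_\ast\mu$ then reproduces the identities $\int x^{dj}\,\mathrm{d}\sigma=\tilde s_j$ with finite moments and injective $\psi$, exactly as before.

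Finally I would assemble the two cases: in either parity the measure $\sigma$ has infinite support and finite moments of every order, so the completed sequence $s_k=\int x^k\,\mathrm{d}\sigma$ is positive definite and restricts to the prescribed values on $P$, giving the required positive definite completion. The principal obstacle is the even-$d$ argument, namely recognizing that offsetting the index set by $d/2$ turns the shifted Hankel matrix $(\tilde s_{a+b+1})$ into a genuinely specified principal submatrix of $\HM$; this is what pins down the Stieltjes property that makes the real root available, and it is exactly the feature that distinguishes the definite conclusion of this corollary from the merely semidefinite conclusion of Theorem~\ref{thm:mainresult}.
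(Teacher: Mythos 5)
Your proposal is correct, but note that the paper itself offers no proof of this corollary: it is stated as imported from Choi--Jafari \cite{CJ15} alongside Theorem~\ref{thm:mainresult}, so there is no internal argument to compare against. Your construction is a complete, self-contained substitute. The two key steps both check out against the paper's definitions: (i) the diagonal principal submatrix of $\HM_{nd}$ indexed by $0,d,\ldots,nd$ has entries $s_{d(a+b)}$, all of which lie in $P=d\N_0$, so the definition of a partial positive definite sequence forces $(\tilde s_{a+b})_{a,b=0}^n$ to be positive definite for every $n$; and (ii) for even $d$, the offset index set $\tfrac d2, \tfrac d2+d,\ldots$ produces the shifted Hankel matrix $(\tilde s_{a+b+1})$, again fully specified inside $P$, which yields the Stieltjes positivity needed to place the representing measure $\mu$ on $[0,\infty)$ before extracting a real $d$-th root. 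The pushforward under $y\mapsto \operatorname{sgn}(y)|y|^{1/d}$ (odd $d$) or $y\mapsto y^{1/d}$ on $[0,\infty)$ (even $d$) is injective, so $\sigma$ inherits the infinite support that positive definiteness of $(\tilde s_{a+b})$ imposes on $\mu$, and the moment bounds $|y|^{k/d}\le 1+|y|^{k}$ give finiteness of all completed moments. This is very much in the spirit of how the paper uses the corollary in Example~2.5, where the completion is exhibited as $\tilde s_k=\int x^{dk}\,\mathrm{d}\sigma(x)$; the one genuinely nontrivial insight your write-up supplies, which the paper never makes explicit, is the even-$d$ Stieltjes reduction in step (ii), without which the substitution $x\mapsto x^{1/d}$ would not be available on all of $\R$.
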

However, if the pattern $P = d\N+\ell_0$ for $d\in \N$ and $\ell_0 \in 2\N$, then the pattern $P$ is not positive definite completable.

\begin{ex}
The subsequence $\st$ of the sequence $\s$ in Example \ref{ex:Hilbert} given by 
\begin{equation}\label{notsubmoment}
\tilde{s}_{k}=\dfrac{1}{(k+1)!}
\end{equation}
is not a positive sequence since the determinant of its second Hankel matrix is 
$$\det{\HM_2}=\left|\begin{matrix}1 & \frac{1}{2} \\[1em]
\frac{1}{2} & \frac{1}{6}
\end{matrix}
\right|<0$$
and hence has no solution to its moment problem. This example shows that an arbitrary subsequence of a positive sequence is not necessarily positive. Thus we need to look for ideas to appropriately remove terms of the sequence while preserving positivity.
\end{ex}

\begin{thm}\label{mean}
Let $\s$ be a positive sequence. The subsequence $\st$ given by $\tilde s_k=s_{k+\ell_k}$ is positive if $\ell_k=kd+\ell_0$ for all $d\in\N_0$ and $\ell_0\in 2\N_0$.
\begin{proof}
By Hamburger's theorem, since $ \s $ is a positive sequence, there exists a non-decreasing function $ \sigma $ on $\R$ that has $ \s $ as its moment sequence. Suppose $\ell_k=kd+\ell_0$ where $d\in\N_0$, and $\ell_0\in 2\N_0$. Then for all real numbers $x_0,x_1,x_2,\cdots, x_m$, we have
\begin{align*}
\sum_{i,j=0}^m x_i x_j \tilde s_{i+j}
&= \sum_{i,j=0}^m x_i x_j s_{i+j+(i+j)d+\ell_0}\\
&= \sum_{i,j=0}^m x_i x_j \intR x^{(1+d)i+(1+d)j+\ell_0} d\sigma(x)\\
&= \intR \left[x^{\frac{\ell_0}{2}}\sum_{i=0}^m x_i x^{(1+d)i}\right]^2d\sigma(x)\geq 0.
\end{align*}
\end{proof}
\end{thm}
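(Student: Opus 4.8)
The plan is to reduce the whole statement to the nonnegativity of a single family of quadratic forms and then invoke Hamburger's theorem in the reverse direction. Since $\s$ is positive, the theorem from the introduction supplies a non-decreasing function $\sigma$ on $\R$ with $s_k = \intR x^k \mathrm{d}\sigma(x)$ for every $k$. If I can show that the Hankel quadratic forms of $\st$ are nonnegative for all choices of real coefficients, then the same criterion certifies that $\st$ is itself a moment sequence, which is exactly what is to be proved.

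First I would rewrite the index to expose its structure. Substituting $\ell_k = kd + \ell_0$ into $\tilde{s}_k = s_{k+\ell_k}$ gives $\tilde{s}_k = s_{k(1+d)+\ell_0}$, so that $\tilde{s}_{i+j} = s_{(i+j)(1+d)+\ell_0}$. The point of this rewriting is that the exponent splits symmetrically as $(1+d)i + (1+d)j + \ell_0$. Next I would insert the integral representation into the quadratic form and interchange the finite sum with the integral; because the exponent separates into a part in $i$, a part in $j$, and the constant $\ell_0$, the integrand factors as $x^{\ell_0}\bigl(\sum_i x_i x^{(1+d)i}\bigr)^2$. Writing $x^{\ell_0} = (x^{\ell_0/2})^2$ then exhibits the integrand as a perfect square, and integrating a nonnegative function against the non-decreasing $\sigma$ yields a nonnegative value, completing the verification.

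The step I expect to matter most is the passage $x^{\ell_0} = (x^{\ell_0/2})^2$, since this is precisely where the hypothesis $\ell_0 \in 2\N_0$ is indispensable: the measure $\mathrm{d}\sigma$ is supported on all of $\R$, so the leftover monomial factor must remain nonnegative for negative $x$ as well, and an odd $\ell_0$ would destroy the square structure and with it the argument. Aside from this observation the computation is routine, so the conceptual content lies entirely in choosing the index pattern $\ell_k = kd+\ell_0$ with $\ell_0$ even so that the transformed exponents reassemble into a perfect square times an even power of $x$.
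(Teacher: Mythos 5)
Your proposal is correct and follows essentially the same route as the paper's own proof: substitute $\ell_k = kd+\ell_0$, insert the integral representation into the Hankel quadratic form, and recognize the integrand as the perfect square $\bigl(x^{\ell_0/2}\sum_i x_i x^{(1+d)i}\bigr)^2$. Your added remark pinpointing $\ell_0 \in 2\N_0$ as the step that keeps the monomial factor a square on all of $\R$ is exactly the right observation, though the paper leaves it implicit.
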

The above shows that any subsequence of a positive sequence is positive if they are extracted in a certain periodic manner.

\begin{ex}
Now consider the following partial sequence
\begin{equation}\label{intro:ex3}
 1,~ ?,~\frac{1}{2},~ ?, ~\frac{1}{3},~?, ~\frac{1}{4},~?, ~\frac{1}{5},~?, ~\frac{1}{6},~?, \cdots.
\end{equation}
Since the specified subsequence $\{ 1/(k+1)\}_{k=0}^{\infty}$ is positive definite, there exists a non-decreasing function ${\tilde{\sigma}}$ on $\R$ such that
\begin{equation*}
\frac{1}{k+1}=\int_{-\infty}^{\infty} x^k \mathrm{d}{\tilde{\sigma}}(x) \quad \text{for all } k\in \N_{0}.
\end{equation*}
By Corollary \ref{cor:mainresult2} the sequence \eqref{intro:ex3} has a positive definite completion. That is, there exists a non-decreasing function $ \sigma $ on $\R$ such that
\begin{equation*}
\frac{1}{k+1}=\int_{-\infty}^{\infty} x^{2k} \mathrm{d} \sigma (x) \quad
\textup{for all } k\in \N_0.
\end{equation*}
\end{ex}
The primary goal in this paper is to investigate connections between solutions of  a moment sequence and solutions of its submoment sequences.

As was seen in an above example, an arbitrary subsequence of a positive sequence is not necessarily positive. We will develop some methods to extract positive subsequences from a positive sequence.

\begin{thm}\label{positiveseq1}
Let $\{f_k\}_{k\in \N_0}$ be a sequence of functions on $\R$ such that 
\begin{equation}\label{cond1}
f_i(x)f_j(x)=f_{i+j}(x).
\end{equation}
Let $\sigma$ be a non-decreasing function on $\R$ such that
\begin{equation}
s_{k}:=\int_{-\infty}^{\infty} f_k(x)d\sigma(x)<\infty  \textit{ for all } k \in \N_0.
\end{equation}
Then the sequence $\s$ is positive.
Furthermore, condition \eqref{cond1} holds if and only if
\begin{equation}\label{geometricform}
f_k(x)= (f_1(x))^k \quad \text{for all } k\in \N_0.
\end{equation}
\end{thm}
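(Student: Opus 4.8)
The plan is to prove Theorem \ref{positiveseq1} in two parts. First I would establish positivity of $\s$ directly from the multiplicative condition \eqref{cond1}. For any reals $x_0,\ldots,x_m$, I would compute the quadratic form $\sum_{i,j=0}^m x_i x_j s_{i+j}$ by substituting the integral representation and then invoking \eqref{cond1} to factor $f_{i+j}(x)=f_i(x)f_j(x)$. This turns the integrand into a perfect square $\left(\sum_{i=0}^m x_i f_i(x)\right)^2$, whence the integral against the non-decreasing $\sigma$ is nonnegative. This mirrors exactly the square-completion argument already used in the proof of Theorem \ref{mean}, so it is routine.

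The second part is the equivalence between \eqref{cond1} and the geometric form \eqref{geometricform}. The ``if'' direction is immediate: if $f_k(x)=(f_1(x))^k$, then $f_i(x)f_j(x)=(f_1(x))^i(f_1(x))^j=(f_1(x))^{i+j}=f_{i+j}(x)$. For the ``only if'' direction I would argue by induction on $k$. The base cases need care: setting $i=j=0$ in \eqref{cond1} gives $f_0(x)^2=f_0(x)$, so pointwise $f_0(x)\in\{0,1\}$; and setting $j=0$ gives $f_i(x)f_0(x)=f_i(x)$. Then, assuming $f_k(x)=(f_1(x))^k$, the relation $f_{k+1}(x)=f_k(x)f_1(x)=(f_1(x))^k f_1(x)=(f_1(x))^{k+1}$ closes the induction, giving \eqref{geometricform} for all $k\in\N_0$.

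The main obstacle I anticipate is the degenerate behavior at $k=0$, where the convention $(f_1(x))^0=1$ must be reconciled with the possibility that $f_0(x)=0$ at some points $x$. At any point where $f_0(x)=1$ the induction runs cleanly from $k=0$; at points where $f_0(x)=0$, the relation $f_i(x)=f_i(x)f_0(x)=0$ forces every $f_i(x)=0$, so the identity $f_k(x)=(f_1(x))^k$ holds there only if one adopts the convention $0^0=0$ at such points, or equivalently restricts the claim to $k\in\N$ and treats $k=0$ separately. I would handle this by noting that the equivalence is to be read pointwise in $x$ and that the $f_0$ ambiguity is a measure-zero or convention-level subtlety that does not affect the moment values $s_k$ for $k\in\N$; alternatively I would simply state the geometric form for $k\ge 1$ and record $f_0$ as an idempotent. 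This keeps the argument elementary while flagging the only genuinely delicate point.
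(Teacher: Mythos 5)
Your proof follows essentially the same route as the paper's: the square-completion argument turning the quadratic form into $\intR\left[\sum_k x_k f_k(x)\right]^2 d\sigma(x)\geq 0$, the trivial ``if'' direction, and induction for the ``only if'' direction. You are in fact more careful than the paper's own proof, which simply asserts that \eqref{geometricform} ``clearly'' holds for $k=0$ and does not address the idempotency of $f_0$ and the degenerate points where $f_0(x)=0$ that you correctly flag.
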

\begin{proof}
For any finite set of real numbers $x_0,x_1,\cdots,x_m$, we have 
\begin{align*}
\sum_{i,j=0}^m x_i x_j s_{i+j}
&= \sum_{i,j=0}^m x_i x_j\intR f_{i+j}(x) d\sigma(x)\\
&= \sum_{i,j=0}^m x_i x_j\intR f_i(x)f_j(x) d\sigma(x)\\
&= \intR \sum_{i,j=0}^m x_i x_j f_i(x)f_j(x) d\sigma(x)\\
&= \intR \left[\sum_{k=0}^m x_k f_k(x)\right]^2 d\sigma(x) \geq 0.
\end{align*}
Clearly \eqref{geometricform} holds for $k=0$ and $k=1$. Assume $f_n(x)=(f_1(x))^n$ holds. Then $f_{n+1}(x)=f_n(x)f_1(x)=(f_1(x))^{n} f_1(x)=(f_1(x))^{n+1}$, inductively. Proof of the converse is trivial.
\end{proof}

Note that Theorem \ref{positiveseq1}  allows many constructions of positive sequences. For example,  letting $f_k(x)=x^k$ we obtain precisely the classical power moment sequence for any non-decreasing function $\sigma $ such that the integrals are finite. Similarly, letting $f_k(x)=a^{kx}$ for a nonzero constant $a\in \R$ or $(\phi(x))^k$ for any $\sigma-$measurable function $\phi$ gives us more positive sequences corresponding to a non-decreasing functions $\sigma$. Theorem \ref{positiveseq1} says that all positive sequences are obtained in this way.

Now using Theorem \ref{positiveseq1}, we can construct positive subsequences from such moment sequence. Let $ \sigma $ be a non-decreasing function on $\R$ and $\s$ be a sequence defined as 
$$s_{k}=\int_{-\infty}^{\infty}(\phi(x))^k d\sigma(x)$$ 
for some $\sigma$-integrable function $\phi(x)$ for which all the above integrals are finite. Then for a fixed $\ell \in \N_0$ the sequence 
$$s_{k_\ell}
:= \int_{-\infty}^{\infty} \left((\phi(x))^\ell\right)^{k} d\sigma(x)
= \int_{-\infty}^{\infty} (\phi(x))^{k\ell} d\sigma(x)
= s_{k\ell}$$ is a positive subsequence of $\s$.
Given a positive sequence $\s$, the problem of identifying all the positive subsequences requires finding all the sequences $\{\ell_k\}\subseteq \N_0$ such that the sequence given by $$\tilde{s}_k=s_{k+\ell_k}$$is a positive sequence.

\begin{thm}\label{geometric}
The sequence $\{a^{\ell_k}\}_{k\in \N_0}$ is a positive sequence for each $a\in \R$ if and only if
$\ell_k=kd+\ell_0$ for some $d\in\N_0$ and $\ell_0\in 2\N_0$.
\begin{proof}
Suppose $\ell_k=kd+\ell_0$ for some $d\in\N_0$ and $\ell_0\in 2\N_0$. Fix $a\in \R$.
Define $\delta :\R \longrightarrow \R$ as a non-decreasing function with only one point of increase $a^d$
such that
\begin{equation*}
a^{\ell_0} = \delta(a^d + 0) - \delta(a^d - 0).
\end{equation*}
Then, by the definition of the Stieltjes integral
\begin{equation*}
a^{\ell_k}=a^{dk+\ell_0}=\intR x^k d\delta(x) \quad \textit{for all } k\in\N_0.
\end{equation*}
Thus, $\{a^{\ell_k}\}_{k\in \N_0}$ is a positive sequence.
Suppose $\{a^{\ell_k}\}_{k\in \N_0}$ is a positive sequence for each $a\in \R$. Then by Hamburger's Theorem the corresponding Hankel matrix $\HM_n$ is positive semidefinite for all 
$n\in \N_0$. Since all its principal minors are nonnegative, $a^{\ell_0} \geq 0$ for all $ a\in \R $, thus $\ell_0 \in 2\N_0$. Since the following $2 \times 2$ principal submatrix of $\HM_{\ell_i}$
\begin{equation*}
\begin{bmatrix}
a^{\ell_i} & a^{\ell_{i+1}}\\
a^{\ell_{i+1}} & a^{\ell_{i+2}}
\end{bmatrix}
\end{equation*}
must have nonnegative determinant, 
$$a^{\ell_i} a^{\ell_{i+2}} - a^{2\ell_{i+1}} \geq 0$$ for all $\ell_i \in 2\N_0$.
Since $a\in\R$ is arbitrary, 
\begin{equation}\label{arithmetic1}
\ell_i + \ell_{i+2} = 2\ell_{i+1}  \quad \text{for all }\ell_i \in 2\N_0.
\end{equation}
Similarly, using the $2 \times 2$ principal submatrix of $\HM_{\ell_i}$
\begin{equation*}
\begin{bmatrix}
a^{\ell_i} & a^{\ell_{i+2}}\\
a^{\ell_{i+2}} & a^{\ell_{i+4}}
\end{bmatrix}
\end{equation*}
\begin{equation}\label{arithmetic2}
\ell_i + \ell_{i+4} = 2\ell_{i+2} \quad \text{for all }\ell_i \in 2\N_0.
\end{equation}
By \eqref{arithmetic1} and \eqref{arithmetic2} it follows that
the sequence $\{ \ell_i,~\ell_{i+1},~\ell_{i+2},~\ell_{i+3},~\ell_{i+4} \}$
is an arithmetic progression for each $\ell_i \in 2\N_0$ and $i\in \N_0$. Thus,
the sequence $\{\ell_i\}_{i\in \N_0}$ has the desired form.
\end{proof}
\end{thm}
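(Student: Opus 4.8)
The plan is to prove the two implications separately. The ``if'' direction is a direct construction of a representing measure, while the ``only if'' direction is where the work lies: I will squeeze the positive-semidefiniteness constraints on the Hankel matrix of $\{a^{\ell_k}\}$ until they force $\{\ell_k\}$ to be an arithmetic progression with an even initial term.

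For the ``if'' direction, suppose $\ell_k = kd + \ell_0$ with $d \in \N_0$ and $\ell_0 \in 2\N_0$, and fix $a \in \R$. I would write $a^{\ell_k} = a^{\ell_0}(a^d)^k$, recognizing the right-hand side as the nonnegative scalar $a^{\ell_0}$ (nonnegative precisely because $\ell_0$ is even) times the geometric sequence $\{(a^d)^k\}_{k \in \N_0}$. It then suffices to place a single point mass of weight $a^{\ell_0}$ at $x = a^d$: letting $\delta$ be the non-decreasing step function on $\R$ whose only jump is $a^{\ell_0}$ at $a^d$, the Stieltjes integral gives $\intR x^k \, d\delta(x) = (a^d)^k a^{\ell_0} = a^{\ell_k}$ for every $k$. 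Hence $\{a^{\ell_k}\}$ is a Hamburger moment sequence, so it is positive; this is exactly the construction sanctioned by Theorem \ref{positiveseq1} with $f_k(x) = x^k$.

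For the ``only if'' direction, assume $\{a^{\ell_k}\}$ is positive for every $a \in \R$, so the Hankel matrix $(a^{\ell_{i+j}})_{i,j}$ is positive semidefinite for all $a$. First I would extract the parity of $\ell_0$ from the $(0,0)$ entry: a diagonal entry of a positive semidefinite matrix is nonnegative, so $a^{\ell_0} \geq 0$ for all real $a$, and testing $a<0$ forces $\ell_0 \in 2\N_0$. The key step is the $2\times 2$ principal submatrix on rows and columns $p, q$, whose nonnegative determinant reads $a^{\ell_{2p} + \ell_{2q}} - a^{2\ell_{p+q}} \geq 0$ and must hold for all $a$. Restricting to $a > 0$ and comparing the regimes $a > 1$ and $0 < a < 1$ shows the inequality can persist in both directions only if the exponents coincide, yielding the family of equalities
\begin{equation*}
\ell_{2p} + \ell_{2q} = 2\ell_{p+q} \qquad \text{for all } p, q \in \N_0.
\end{equation*}

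It remains to solve this functional equation, which I expect to be the one genuinely delicate point. Setting $h(n) := \ell_n - \ell_0$ and taking $q = 0$ gives $h(2p) = 2h(p)$; substituting this back into the general relation collapses it to Cauchy's equation $h(p+q) = h(p) + h(q)$ on $\N_0$, whose only solutions are $h(n) = dn$ with $d := \ell_1 - \ell_0$. Thus $\ell_n = dn + \ell_0$, and since each $\ell_n$ lies in $\N_0$ the common difference $d$ must be a nonnegative integer, i.e. $d \in \N_0$; together with $\ell_0 \in 2\N_0$ this is the asserted form. The main obstacle is not any single estimate but the passage from the pointwise determinant inequalities to the rigid arithmetic progression: all the leverage comes from demanding the inequality for \emph{every} $a$ at once, and one must check that the $2\times 2$ minors indexed by $2p, 2q, p+q$ already reach every relation needed to pin down the whole sequence.
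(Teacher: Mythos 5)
Your proof is correct and follows essentially the same route as the paper: a one--point measure at $a^d$ with mass $a^{\ell_0}$ for sufficiency, and for necessity the sign of the $(0,0)$ entry plus nonnegativity of $2\times 2$ principal minors of the Hankel matrix, with the two regimes $a>1$ and $0<a<1$ forcing equality of exponents. The only divergence is the last combinatorial step: you extract the full relation $\ell_{2p}+\ell_{2q}=2\ell_{p+q}$ from all pairs $(p,q)$ and solve it via Cauchy's functional equation, whereas the paper uses only the minors indexed by $(j,j+1)$ and $(j,j+2)$ and assembles the arithmetic progression directly; your indexing by $(p,q)$ is in fact the more careful one, since the paper's displayed submatrices are principal only when the first subscript is even.
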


For later use, we note that while showing Theorems \ref{mean} and \ref{geometric}, we also have proved the following. 
\begin{cor}
Let $\s$ be a positive sequence. Then the subsequence 
$\{\tilde s_k\}_{k\in \N_0}$ given by
$\tilde s_k=s_{k+\ell_k}$ is a positive sequence if $\{a^{\ell_k}\}_{k\in \N_0}$ is a positive sequence for each $a \in \R$.
\end{cor}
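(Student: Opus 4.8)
The plan is to recognize that this corollary is nothing more than the composition of the two preceding theorems, read in the correct directions. The hypothesis that $\{a^{\ell_k}\}_{k\in\N_0}$ is a positive sequence for each $a\in\R$ is exactly the hypothesis of the ``only if'' half of Theorem~\ref{geometric}, while the desired conclusion is exactly the conclusion of Theorem~\ref{mean}. The bridge between the two is the affine form $\ell_k=kd+\ell_0$, which both theorems are about.

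First I would invoke Theorem~\ref{geometric}. Its forward direction says that if $\{a^{\ell_k}\}_{k\in\N_0}$ is positive for every real $a$, then necessarily $\ell_k=kd+\ell_0$ for some $d\in\N_0$ and $\ell_0\in 2\N_0$. Here I would take care that the quantifier matches: Theorem~\ref{geometric} requires positivity for \emph{each} $a\in\R$, so that the nonnegativity of the relevant $2\times2$ minors holds identically in $a$ and forces the arithmetic-progression relations among the $\ell_i$; this is precisely what is assumed in the corollary.

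Next, with the affine form in hand, I would apply Theorem~\ref{mean}. Since $\s$ is positive by assumption and $\ell_k=kd+\ell_0$ with $\ell_0\in 2\N_0$, Theorem~\ref{mean} yields directly that $\tilde s_k=s_{k+\ell_k}$ is a positive sequence. Concretely, the computation there rewrites the quadratic form as $\intR\big[x^{\ell_0/2}\sum_i x_i x^{(1+d)i}\big]^2\,d\sigma(x)\geq0$, the factorization being possible exactly because the exponent $(i+j)+\ell_{i+j}$ collapses to $(1+d)i+(1+d)j+\ell_0$ under the affine form, and the square root $x^{\ell_0/2}$ being legitimate exactly because $\ell_0$ is even.

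I do not expect a genuine obstacle, since both implications are already established; the only points to watch are that the two theorems be chained in compatible directions and that the evenness of $\ell_0$ be carried through both steps. If a self-contained argument were preferred to citing the theorems, I would inline these two steps: extract $\ell_k=kd+\ell_0$ from the nonnegativity of the $2\times2$ minors of the Hankel matrices of $\{a^{\ell_k}\}_{k\in\N_0}$ as in Theorem~\ref{geometric}, then substitute this form into the integral representation $s_k=\intR x^k\,d\sigma(x)$ and complete the square as in Theorem~\ref{mean}.
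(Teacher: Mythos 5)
Your proposal is correct and is exactly the argument the paper intends: the authors state the corollary as an immediate consequence of Theorems \ref{mean} and \ref{geometric}, chaining the ``only if'' direction of Theorem \ref{geometric} (which forces $\ell_k=kd+\ell_0$ with $\ell_0\in 2\N_0$) into Theorem \ref{mean}. Your attention to the quantifier over $a$ and to the evenness of $\ell_0$ matches the roles these hypotheses play in the paper's proofs.
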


\section{Relationship between moments and their submoments}

Assume that $\s$ is a moment sequence and indeterminate.
Then the Hankel matrices $\HM_n$ corresponding to $\s$ are positive semidefinite for all $n \in \N_0$ and there exist distinct non-decreasing functions $\sigma_1$ and $\sigma_2$ such that
\begin{equation*}
s_{k}=\int_{-\infty}^{\infty} x^k \mathrm{d}\sigma_1 (x)\quad \text{and} \quad  s_{k}=\int_{-\infty}^{\infty} x^k \mathrm{d} \sigma_2 (x) \quad 
\text{for all } k\in \N_{0},
\end{equation*}
respectively.
Let $d \in \N$ and $\ell_0 \in 2\N_0$. 
Let $\subs$ be the subsequence of $\s$ given by 
\begin{equation*}
\tilde{s}_k:=s_{kd+\ell_0} \quad \text{for all } k\in \N_0.
\end{equation*}
Then it is trivial that
\begin{equation*}
\int_{-\infty}^{\infty} x^{kd+\ell_0} \mathrm{d}\sigma_1 (x)
=  \int_{-\infty}^{\infty} x^{kd+\ell_0} \mathrm{d} \sigma_2(x)
\quad \text{for all } k\in \N_{0},
\end{equation*}
respectively.
Since every principal submatrix of a positive (semi)definite matrix is positive (semi)definite, $\subs$ is positive (semi)definite. This implies that there is a nondecreasing function $\sigma$ such that
\begin{equation}
\tilde{s}_k = \int_{-\infty}^{\infty} x^{k} \mathrm{d}\sigma (x)
\quad \text{for all } k\in \N_{0}.
\end{equation}
That is,
\begin{equation*}
\int_{-\infty}^{\infty} x^{k} \mathrm{d}\sigma (x)
= \int_{-\infty}^{\infty} x^{kd+l_0} \mathrm{d} \sigma_1 (x) 
= \int_{-\infty}^{\infty} x^{kd+l_0} \mathrm{d} \sigma_2 (x) 
\end{equation*}
for all $k\in \N_{0}$.
Note that even though there are two distinct non-decreasing functions $\sigma_1$ and $\sigma_2$, it is not clear whether such a non-decreasing function $\sigma$ is unique or not.

As mentioned earlier, the result characterizing the determinacy by the eigenvalue of the Hankel matrices exists as follows \cite{BCI02}.
\begin{thm}\label{Berg:determinacy}
Let $\s$ be a moment sequence and $ \lambda_n $ be the smallest eigenvalue of its Hankel matrix $ \HM_n $. Then
$\s$ is determinate if and only if $\lambda_n \rightarrow 0$ as $n \rightarrow \infty$.
Furthermore, in the indeterminate case the positive lower bound is explicitly found.
\end{thm}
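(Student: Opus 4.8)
The plan is to work throughout with the reciprocal $1/\lambda_n$, which equals the operator norm $\|\HM_n^{-1}\|_{\mathrm{op}}$, and to connect it to the orthonormal polynomials $p_k(t)=\sum_j b_{kj}t^j$ of $\sigma$. First I would dispose of the degenerate case: if $\HM_n$ is singular for some $n$, then $\lambda_n=0$, and since $\lambda_n$ is nonincreasing in $n$ (by Cauchy interlacing, as $\HM_{n-1}$ is a leading principal submatrix of $\HM_n$) and bounded below by $0$, we get $\lambda_n\to 0$; such a sequence is carried by finitely many points and is determinate, so the equivalence holds trivially. Hence I may assume $\s$ is positive definite, so every $\HM_n$ is invertible, the limit $\lambda_\infty:=\lim_n\lambda_n\ge 0$ exists, and the $p_k$ are well defined. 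Writing $B=(b_{kj})$, orthonormality reads $B\HM_nB^{\top}=I$, whence $\HM_n^{-1}=B^{\top}B$. Testing this against $\vec v(z)=(1,z,\dots,z^n)^{\top}$ yields the identity
\[
\vec v(z)^{*}\HM_n^{-1}\vec v(z)=\sum_{k=0}^n|p_k(z)|^2 ,
\]
valid for complex $z$, which is the engine of both directions.

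For the implication (determinate $\Rightarrow\lambda_n\to 0$), I would argue the contrapositive. Using $\tfrac{1}{\lambda_n}=\lambda_{\max}(\HM_n^{-1})=\max_{\vec v\ne 0}\frac{\vec v^{*}\HM_n^{-1}\vec v}{\|\vec v\|^2}$ together with the identity above,
\[
\sum_{k=0}^n|p_k(z)|^2\le\frac{1}{\lambda_n}\sum_{k=0}^n|z|^{2k}.
\]
Suppose $\lambda_\infty=\delta>0$. Then for every $z$ with $|z|<1$ we obtain $\sum_{k=0}^{\infty}|p_k(z)|^2\le\frac{1}{\delta(1-|z|^2)}<\infty$. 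Since this disk contains non-real points, the classical criterion that convergence of $\sum_k|p_k(z_0)|^2$ at a single $z_0\in\C\setminus\R$ forces indeterminacy shows $\s$ is indeterminate, as required.

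For the reverse implication (indeterminate $\Rightarrow\lambda_n\not\to 0$) I would bound $1/\lambda_n$ from above by the trace: $\tfrac{1}{\lambda_n}=\|\HM_n^{-1}\|_{\mathrm{op}}\le\mathrm{tr}(\HM_n^{-1})=\sum_{k=0}^n\sum_j|b_{kj}|^2$. The key observation is the Parseval identity $\sum_j|b_{kj}|^2=\frac{1}{2\pi}\int_0^{2\pi}|p_k(e^{i\theta})|^2\,d\theta$, giving
\[
\frac{1}{\lambda_n}\le\frac{1}{2\pi}\int_0^{2\pi}\sum_{k=0}^n|p_k(e^{i\theta})|^2\,d\theta .
\]
In the indeterminate case the reproducing-kernel diagonal $K(z,z)=\sum_k|p_k(z)|^2$ converges locally uniformly on $\C$, hence is bounded on the compact unit circle; letting $n\to\infty$ produces the uniform bound $\frac{1}{\lambda_n}\le\frac{1}{2\pi}\int_0^{2\pi}K(e^{i\theta},e^{i\theta})\,d\theta=:M<\infty$, so $\lambda_\infty\ge 1/M>0$. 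This both refutes $\lambda_n\to 0$ and furnishes the explicit positive lower bound promised in the statement.

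The main obstacle is the indeterminate direction, which rests on two inputs from the classical theory that I would need to invoke carefully: that $\sum_k|p_k(z)|^2$ converges locally uniformly (so its integral over the circle is finite) exactly in the indeterminate case, and the equivalence between indeterminacy and convergence of this series at a non-real point. Once these are granted, the passages $n\to\infty$ (monotone convergence of the integrand, and deducing $\lambda_\infty\ge 1/M$) are routine; the conceptual heart is the Parseval identification of $\mathrm{tr}(\HM_n^{-1})$ with the integral of the Christoffel--Darboux kernel over the unit circle.
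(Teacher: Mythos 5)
The paper does not prove this statement at all---it is quoted verbatim as a known result of Berg, Chen and Ismail \cite{BCI02}, so there is no in-paper proof to compare against. Your argument is correct and is essentially a faithful reconstruction of the Berg--Chen--Ismail proof: the identity $\vec v(z)^{*}\HM_n^{-1}\vec v(z)=\sum_{k\le n}|p_k(z)|^2$ gives the ``determinate $\Rightarrow\lambda_n\to0$'' direction via the classical criterion at a non-real point inside the unit disk, and the trace/Parseval bound $1/\lambda_n\le\frac{1}{2\pi}\int_0^{2\pi}\sum_{k\le n}|p_k(e^{i\theta})|^2\,d\theta$ yields exactly their explicit lower bound $\lambda_n\ge\bigl(\frac{1}{2\pi}\int_0^{2\pi}K(e^{i\theta},e^{i\theta})\,d\theta\bigr)^{-1}$ in the indeterminate case. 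The only external inputs are the two classical facts you flag (local uniform convergence of $\sum_k|p_k(z)|^2$ in the indeterminate case, and indeterminacy following from convergence at a single non-real point), both standard from Akhiezer; modulo citing those, the proof is complete.
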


Using the preceding theorem one can show the following.
\begin{thm}
Let $\s$ be a positive sequence and $\subs$ be its subsequence given by $\tilde{s}_k=s_{kd+\ell_0}$ for $d\in \N$ and $\ell_0 \in 2\N_0$. If $\s$ is indeterminate, then $\subs$ is indeterminate.
\begin{proof}
Let $\HM_n$ and $\tilde{\HM}_n$ be $n \times n$ Hankel matrices corresponding to $\s$ and $\subs$, respectively. 
Since $\s$ is indeterminate, by Theorem \ref{Berg:determinacy} the smallest eigenvalues of $ \HM_n $ have a positive lower bound $C>0$, 
Let $\lambda$ and $\tilde{\lambda}_n$ be the smallest eigenvalues of $\HM_n$ and $\tilde{\HM}_n$, respectively.
Since $\tilde{\HM}_n$ is a principal submatrix of $\HM_n$, by the Cauchy interlacing theorem
$\tilde{\lambda}_n \geq \lambda_n \geq C >0$ for all $n\in \N_0$. So $\tilde{\lambda}_n$ has the positive lower bound $C$ for all $n\in \N_0$. Thus $\subs$ is indeterminate.
\end{proof}
\end{thm}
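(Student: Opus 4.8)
The plan is to route everything through the spectral characterization of determinacy in Theorem \ref{Berg:determinacy}, so that the statement becomes a claim about eigenvalues staying bounded away from zero. First I would observe that $\subs$ is itself a positive sequence (this is the content of the discussion preceding the theorem, or of Theorem \ref{mean}), so Theorem \ref{Berg:determinacy} applies to it. It then suffices to show that the smallest eigenvalue $\tilde\lambda_n$ of the Hankel matrix $\tilde\HM_n$ of $\subs$ does \emph{not} tend to $0$. The crux of the argument is a structural fact: each $\tilde\HM_n$ is a principal submatrix of a suitable Hankel matrix $\HM_N$ of the original sequence, which lets me transfer the positive lower bound guaranteed by indeterminacy of $\s$.

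To make the embedding explicit, I would use the hypothesis $\ell_0\in 2\N_0$, which guarantees that $\ell_0/2$ is a nonnegative integer. The entries of the subsequence Hankel matrix are $(\tilde\HM_n)_{i,j}=\tilde s_{i+j}=s_{(i+j)d+\ell_0}$. Setting the index offsets
\begin{equation*}
p_i := id+\tfrac{\ell_0}{2}\in\N_0,\qquad i=0,1,\dots,n,
\end{equation*}
one has $p_i+p_j=(i+j)d+\ell_0$, so that $(\HM_N)_{p_i,p_j}=s_{p_i+p_j}=s_{(i+j)d+\ell_0}=(\tilde\HM_n)_{i,j}$. Hence $\tilde\HM_n$ is exactly the principal submatrix of $\HM_N$ indexed by $\{\ell_0/2,\,d+\ell_0/2,\,\dots,\,nd+\ell_0/2\}$, where it suffices to take $N=nd+\ell_0/2$.

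With this in hand the conclusion follows quickly. Since $\s$ is indeterminate, Theorem \ref{Berg:determinacy} furnishes a constant $C>0$ with $\lambda_N\geq C$ for all $N$, where $\lambda_N$ is the smallest eigenvalue of $\HM_N$. Because $\tilde\HM_n$ is a principal submatrix of $\HM_N$, the Cauchy interlacing theorem gives $\tilde\lambda_n\geq\lambda_N\geq C>0$, so $\tilde\lambda_n\not\to 0$; the contrapositive of Theorem \ref{Berg:determinacy} applied to $\subs$ then yields indeterminacy. The step I expect to require the most care is the principal-submatrix bookkeeping: verifying that the chosen index set reproduces $\tilde\HM_n$ exactly and that it is admissible, i.e. that $\ell_0/2\in\N_0$ and $nd+\ell_0/2\leq N$. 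This is precisely where the evenness assumption $\ell_0\in2\N_0$ is used; once it is in place, the positivity of $\subs$ and the interlacing bound are routine.
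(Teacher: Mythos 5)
Your proof is correct and follows essentially the same route as the paper: invoke Theorem \ref{Berg:determinacy} to get a uniform positive lower bound on the smallest eigenvalues of $\HM_N$, realize $\tilde{\HM}_n$ as a principal submatrix, and apply Cauchy interlacing. In fact your explicit index bookkeeping (taking the principal submatrix of $\HM_{nd+\ell_0/2}$ indexed by $\{id+\ell_0/2\}_{i=0}^n$) is more careful than the paper's, which loosely calls $\tilde{\HM}_n$ a principal submatrix of $\HM_n$ itself.
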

It is easy to note that this also implies that if $ \subs $ is determinate, then $ \s $ is also determinate. Thus, determinacy of the $\s$ may be deduced from a sparse subsequence of the original sequence.

Given two positive sequences $\s$ and $\st$ with moment solutions $\sigma$ and $\tilde\sigma$ respectively, if $\sigma(x)=\tilde\sigma(x)$, then it is clear that $ s_k = \tilde s_k $ for all $ k\in\N_0 $. Therefore the moment solutions to any two distinct positive sequences can never be equal. Uniqueness in this correspondence raises the question of how $\sigma(x)$ and $\tilde\sigma(x)$ compare.
\begin{ex}
Consider a positive sequence $\s$ with its moment solution $\sigma$. If some of the terms from $\s$ are missing then we obtain a subsequence which, if positive, gives another moment solution. For an appropriately chosen $\{\ell_k\}\subseteq \N_0$, assume that $\tilde s_k=s_{k+\ell_k}$ is positive and has the moment solution $\tilde\sigma$. Note
\begin{equation}
\int_{-\infty}^{\infty} x^k \mathrm{d}\tilde\sigma(x)=\int_{-\infty}^{\infty} x^{k+\ell_k} \mathrm{d} \sigma(x).
\end{equation}
Then for any polynomial ${\displaystyle P(x)=\sum_{k=0}^n a_k x^k}$, we have
\begin{equation}
\int_{-\infty}^{\infty} P(x) \mathrm{d} \tilde\sigma(x)
=\int_{-\infty}^{\infty} P_{\ell_k}(x) \mathrm{d}\sigma(x),
\end{equation}
where ${\displaystyle P_{\ell_k}(x)=\sum_{k=0}^n a_k x^{k+\ell_k}}$.
\end{ex}

\begin{thm}\label{sigmatilde}
Let $\s$ be a positive sequence and $\st$ be its subsequence given by $\tilde s_k=s_{k+\ell_0}$ for a fixed $\ell_0 \in 2\N_0$. Let $\sigma$ and $\tilde\sigma$ be the moment solutions of the sequences $\s$ and $\st$, respectively. If one of the moment problems is determinate, then
\begin{equation}\label{eqnsols}
\int_{-\infty}^{\infty} f(x) x^{\ell_0} \mathrm{d}\sigma(x)
= \int_{-\infty}^{\infty} f(x) \mathrm{d} \tilde{\sigma}(x)
\quad \textup{for all }   f\in \mathrm{L}^2_{\sigma}\cap \mathrm{L}^2_{\tilde\sigma}.
\end{equation}
\end{thm}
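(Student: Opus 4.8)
The plan is to reduce \eqref{eqnsols} to the single measure-theoretic identity $\mathrm{d}\tilde\sigma(x) = x^{\ell_0}\,\mathrm{d}\sigma(x)$, and to obtain that identity from whichever of the two moment problems is determinate. First I would record the elementary observation that drives everything: since $\ell_0 \in 2\N_0$, the function $x^{\ell_0}$ is nonnegative, so $x^{\ell_0}\,\mathrm{d}\sigma(x)$ defines a nondecreasing measure, and its moments are
\begin{equation*}
\int_{-\infty}^{\infty} x^k\, x^{\ell_0}\,\mathrm{d}\sigma(x) = \int_{-\infty}^{\infty} x^{k+\ell_0}\,\mathrm{d}\sigma(x) = s_{k+\ell_0} = \tilde s_k .
\end{equation*}
Thus $x^{\ell_0}\mathrm{d}\sigma$ is a moment solution of $\st$; in particular it shares all its moments with $\tilde\sigma$, so the two agree against every polynomial, which is exactly the polynomial case of \eqref{eqnsols}:
\begin{equation*}
\int_{-\infty}^{\infty} P(x)\, x^{\ell_0}\,\mathrm{d}\sigma(x) = \int_{-\infty}^{\infty} P(x)\,\mathrm{d}\tilde\sigma(x) \quad \text{for every polynomial } P .
\end{equation*}

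Next I would dispose of the case in which $\st$ is the determinate problem. Here the solution of the $\st$-moment problem is unique, and since both $x^{\ell_0}\mathrm{d}\sigma$ and $\tilde\sigma$ are such solutions, they coincide as measures: $\mathrm{d}\tilde\sigma = x^{\ell_0}\,\mathrm{d}\sigma$. Then \eqref{eqnsols} is immediate for every $f \in \mathrm{L}^1_{\tilde\sigma}$, and in particular for every $f\in \mathrm{L}^2_{\sigma}\cap \mathrm{L}^2_{\tilde\sigma}$, because the two integrands are literally equal; the hypothesis $f\in \mathrm{L}^2_{\tilde\sigma}$ secures integrability, using that a finite measure satisfies $\mathrm{L}^2\subseteq \mathrm{L}^1$ and that $\tilde s_0 = s_{\ell_0}<\infty$.

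The remaining case, in which $\s$ is the determinate problem, is where I expect the real work to lie. The idea is to upgrade the polynomial identity to all admissible $f$ by continuity. Determinacy of $\s$ yields density of the polynomials in $\mathrm{L}^2_{\sigma}$, so for $f\in \mathrm{L}^2_{\sigma}\cap \mathrm{L}^2_{\tilde\sigma}$ I would choose polynomials $P_n\to f$ in $\mathrm{L}^2_{\sigma}$. Cauchy--Schwarz then controls the left-hand side, since $x^{\ell_0}\in \mathrm{L}^2_{\sigma}$ (indeed $\int x^{2\ell_0}\,\mathrm{d}\sigma = s_{2\ell_0}<\infty$):
\begin{equation*}
\left| \int_{-\infty}^{\infty} (P_n-f)\, x^{\ell_0}\,\mathrm{d}\sigma \right| \le \sqrt{s_{2\ell_0}}\;\|P_n-f\|_{\mathrm{L}^2_{\sigma}} \longrightarrow 0 .
\end{equation*}

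The difficulty is the right-hand side: to conclude $\int P_n\,\mathrm{d}\tilde\sigma \to \int f\,\mathrm{d}\tilde\sigma$ (via Cauchy--Schwarz against $1\in \mathrm{L}^2_{\tilde\sigma}$) one needs the \emph{same} approximants to converge in $\mathrm{L}^2_{\tilde\sigma}$, and convergence in $\mathrm{L}^2_{\sigma}$ does not by itself control the $\tilde\sigma$-integrals near the zeros of $x^{\ell_0}$. Matching the two limits simultaneously is the main obstacle. I would attack it by first establishing the density fact I am invoking (a determinate Hamburger measure has its polynomials dense in the associated $\mathrm{L}^2$), and then checking whether determinacy of $\s$ propagates, through the relation $\mathrm{d}\tilde\sigma = x^{\ell_0}\,\mathrm{d}\sigma$, to the density required on the $\tilde\sigma$-side — thereby reducing this case to the clean measure identity already handled when $\st$ is determinate.
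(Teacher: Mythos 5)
Your handling of the case where $\st$ is the determinate problem is complete and in fact cleaner than the paper's: the observation that $x^{\ell_0}\,\mathrm{d}\sigma$ is itself a representing measure for $\st$ (legitimate because $\ell_0$ is even) gives the polynomial identity for free, and uniqueness then yields the measure identity $\mathrm{d}\tilde\sigma = x^{\ell_0}\,\mathrm{d}\sigma$, from which \eqref{eqnsols} follows pointwise on integrands. The genuine gap is the case you yourself flag and never close: when only $\s$ is determinate. This case cannot simply be reduced to the first one, because determinacy of $\sigma$ need not propagate to determinacy of $x^{\ell_0}\,\mathrm{d}\sigma$ (the index of determinacy of a determinate measure can be finite), so $\tilde\sigma$ and $x^{\ell_0}\,\mathrm{d}\sigma$ may be genuinely distinct solutions of the $\st$-problem, and \eqref{eqnsols} is then a nontrivial assertion about two different measures. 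Your proposal ends with a plan to ``check whether determinacy propagates,'' which is exactly the step that requires proof and may fail; as written, half the theorem is unproved.

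For comparison, the paper argues both cases at once: it forms $\Phi=\Phi_1-\Phi_2$ with $\Phi_1(f)=\int f(x)\,x^{\ell_0}\,\mathrm{d}\sigma(x)$ and $\Phi_2(f)=\int f(x)\,\mathrm{d}\tilde\sigma(x)$, verifies by Cauchy--Schwarz that $\Phi$ is bounded on $\mathrm{L}^2_{\sigma}\cap \mathrm{L}^2_{\tilde\sigma}$, shows $\Phi$ annihilates polynomials, invokes the fact that determinacy gives density of polynomials in the corresponding $\mathrm{L}^2$ space, and then asserts that the polynomials are ``hence dense in $\mathrm{L}^2_{\sigma}\cap \mathrm{L}^2_{\tilde\sigma}$.'' That last assertion is precisely the simultaneous-approximation issue you identified: density in one of the two spaces does not produce polynomials converging to $f$ in both norms at once, which is what boundedness of $\Phi$ on the intersection requires. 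So your diagnosis of where the difficulty lies is sound and even exposes a soft spot in the paper's own argument --- but a complete proof must either justify density in the intersection norm or supply another mechanism for the $\s$-determinate case, and your proposal does neither.
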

\begin{proof} 
To prove the first assertion, define the following two linear functionals:
$$\Phi_1(f(x))
=\int_{-\infty}^{\infty} f(x) x^{\ell} \mathrm{d} \sigma(x)
\quad \textup{and }
\Phi_2(g(x))
=\int_{-\infty}^{\infty}  g(x) \mathrm{d} \tilde{\sigma}(x)$$
for all $f\in \mathrm{L}^2_{\sigma}$ and $g\in \mathrm{L}^2_{\tilde\sigma}$.
To see $\Phi_1$ and $\Phi_2$ are bounded, we have
$$\Big|\Phi_1(f(x))\Big|
\leq \int_{-\infty}^{\infty} \left|f(x) x^{\ell_0} \right| \mathrm{d} \sigma(x) 
\leq \|f(x)\|_{\mathrm{L}^2_{\sigma}}\sqrt{s_{2\ell_0}}<\infty$$
and
$$\Big| \Phi_2(f(x)) \Big| 
\leq\int_{-\infty}^{\infty} \left|g(x)\right| \mathrm{d} \tilde{\sigma}(x)
\leq \|g(x)\|_{\mathrm{L}^2_{\tilde\sigma}}\sqrt{s_{0}}<\infty.$$
Now for any function $f\in \mathrm{L}^2_{\sigma}\cap \mathrm{L}^2_{\tilde\sigma}$, $$\Phi(f(x))=\Phi_1(f(x))-\Phi_2(f(x))$$ 
is a bounded linear functional.\\
Observe that for any polynomial $P(x)=a_n x^n + \cdots + a_1 x + a_0$,
\begin{align*}
\Phi(P(x))
= & \int_{-\infty}^{\infty} P(x)x^{\ell_0} \mathrm{d} \sigma(x) 
 -\int_{-\infty}^{\infty} P(x) \mathrm{d} \tilde{\sigma}(x)\\
= &\int_{-\infty}^{\infty}  \left[a_n x^{n+\ell_0} + \cdots + a_0 x^{\ell_0}\right] \mathrm{d} \sigma(x)
-\int_{-\infty}^{\infty} \left[ a_n x^n + \cdots + a_0\right] \mathrm{d} \tilde{\sigma}(x)\\
= & a_n(s_{n+\ell_0} - \tilde s_n) + \cdots + a_1(s_{1+\ell_0}
-\tilde s_1)+a_0(s_{\ell_0}-\tilde s_0)\\
= & 0.
\end{align*}
Due to the given determinacy of the moment problems, the set of polynomials is dense in $\mathrm{L}^2_{\sigma}$ or $\mathrm{L}^2_{\tilde\sigma}$. Hence the set of polynomials is dense in $\mathrm{L}^2_{\sigma}\cap \mathrm{L}^2_{\tilde\sigma}$. Therefore, by the Hahn-Banach Theorem, $\Phi$ is identically zero. Thus equation \eqref{eqnsols} holds.
\end{proof}

The above theorem is particularly useful when the first finitely many moments are missing. It says that we can find the moment solution with the remaining data. Whether the moment solution of the subsequence is determinate or not is to be studied more carefully in comparison with the determinacy of the moment solution of the original sequence. It is, however, known that the first term in the subsequence $\{s_{k+\ell_0}\}_{k\in \N_0}$ can be modified to make the moment problem determinate, but the solution will not remain the same.

The following theorem gives us a relation between $\mathrm{L}^2$ spaces related to a moment sequence and its submoment sequence. Here we require an evenness assumption on $\ell_0$ to guarantee that $\nu$ is a nondecreasing measure.

\begin{cor}
Consider a positive sequence $\s$ and its subsequence $\st$ given by $\tilde s_k=s_{k+\ell_0}$ for some $\ell_0\in 2\N_0$. Let $\sigma$ and $\tilde\sigma$ be the moment solutions of the sequence and the subsequence, respectively. Then there is a measure $\nu$, absolutely continuous with respect to $\sigma$, such that
\begin{equation}\label{inclusion}
\left(\mathrm{L}^2_{\sigma}\cap \mathrm{L}^2_{\tilde\sigma}\right)\subseteq \mathrm{L}^2_{\nu}.
\end{equation}
\end{cor}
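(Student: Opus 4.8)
The plan is to produce $\nu$ explicitly by setting $d\nu = x^{\ell_0}\,d\sigma$, that is, giving $\nu$ the Radon--Nikodym derivative $x^{\ell_0}$ with respect to $\sigma$. Since $\ell_0 \in 2\N_0$ the density $x^{\ell_0}$ is nonnegative on all of $\R$, so $\nu$ is a genuine nondecreasing measure; this is exactly where the evenness hypothesis enters, and it is precisely what would fail for odd $\ell_0$. Absolute continuity $\nu \ll \sigma$ is built into the definition, since $\sigma(A)=0$ forces $\int_A x^{\ell_0}\,d\sigma = 0$. I would also note at the outset that, being moment solutions, $\sigma$ and $\tilde\sigma$ are finite measures with total masses $s_0$ and $\tilde s_0 = s_{\ell_0}$, and that $\nu(\R) = s_{\ell_0} < \infty$, so $\nu$ is finite as well.

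The crux is to identify $\nu$ with $\tilde\sigma$ by feeding indicator functions into the identity \eqref{eqnsols} of Theorem \ref{sigmatilde}. The point is that finiteness of $\sigma$ and $\tilde\sigma$ places the indicator $\chi_A$ of every Borel set $A$ inside $\mathrm{L}^2_\sigma \cap \mathrm{L}^2_{\tilde\sigma}$, because $\|\chi_A\|^2_{\mathrm{L}^2_\sigma} = \sigma(A) \le s_0 < \infty$ and likewise against $\tilde\sigma$. Applying \eqref{eqnsols} with $f = \chi_A$ then gives
\[
\nu(A) = \int_{-\infty}^{\infty} \chi_A(x)\, x^{\ell_0}\,d\sigma(x) = \int_{-\infty}^{\infty} \chi_A(x)\, d\tilde\sigma(x) = \tilde\sigma(A)
\]
for every Borel set $A$, so that $\nu = \tilde\sigma$ as measures.

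Once the equality of measures $\nu = \tilde\sigma$ is in hand, $\mathrm{L}^2_\nu$ and $\mathrm{L}^2_{\tilde\sigma}$ are literally the same space, and the desired inclusion is immediate and in fact stronger than needed: $\mathrm{L}^2_\sigma \cap \mathrm{L}^2_{\tilde\sigma} \subseteq \mathrm{L}^2_{\tilde\sigma} = \mathrm{L}^2_\nu$, with the factor $\mathrm{L}^2_\sigma$ playing no role. I expect the only real obstacle to lie in the middle step, namely justifying that the functional identity of Theorem \ref{sigmatilde} --- whose natural habitat is $\mathrm{L}^2$ integrands --- may legitimately be tested on indicator functions and thereby upgraded to an equality of Borel measures. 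This rests entirely on the finiteness of $\sigma$ and $\tilde\sigma$ (which is what makes indicators square-integrable) together with the determinacy hypothesis silently inherited from Theorem \ref{sigmatilde}, since that is what secures \eqref{eqnsols} in the first place. The remaining verifications --- nonnegativity, absolute continuity, and the final inclusion --- are purely formal.
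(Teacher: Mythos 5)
Your proposal is correct and uses the same measure as the paper: the paper's proof also sets $\mathrm{d}\nu = x^{\ell_0}\,\mathrm{d}\sigma$, notes $\nu \ll \sigma$, and invokes \eqref{eqnsols} from Theorem \ref{sigmatilde}. The one place you go beyond the paper is the identification $\nu = \tilde\sigma$ obtained by testing \eqref{eqnsols} on indicator functions, and this is a genuine improvement rather than a detour: the paper passes directly from the identity $\int f\,\mathrm{d}\nu = \int f\,\mathrm{d}\tilde\sigma$ for $f \in \mathrm{L}^2_\sigma \cap \mathrm{L}^2_{\tilde\sigma}$ to the conclusion ``$f \in \mathrm{L}^2_\nu$,'' which does not follow from equality of first integrals alone (one needs $\int |f|^2\,\mathrm{d}\nu < \infty$, not $\int f\,\mathrm{d}\nu < \infty$). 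Your step supplies exactly the missing justification, since once $\nu = \tilde\sigma$ as Borel measures one has $\mathrm{L}^2_\nu = \mathrm{L}^2_{\tilde\sigma}$ and the inclusion is immediate. You are also right that the determinacy hypothesis of Theorem \ref{sigmatilde} is silently inherited here; the paper's statement of the corollary omits it as well, so both proofs share that dependence.
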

\begin{proof}
Let $f\in \mathrm{L}^2_{\sigma}\cap \mathrm{L}^2_{\tilde\sigma}$. By equation \eqref{eqnsols} we have
$$\int_{-\infty}^{\infty} f(x) x^{\ell_0} \mathrm{d} \sigma(x)
=\int_{-\infty}^{\infty} f(x) \mathrm{d} \tilde{\sigma}(x)
\quad \textup{for all }
f\in \mathrm{L}^2_{\sigma}\cap \mathrm{L}^2_{\tilde\sigma}.$$
Set $\mathrm{d}\nu = x^{\ell_0} \mathrm{d}\sigma$. Then $ \nu << \sigma $ and
\begin{equation}
\int_{-\infty}^{\infty} f(x) \mathrm{d} \nu(x)
= \int_{-\infty}^{\infty} f(x) \mathrm{d} \tilde{\sigma}(x),
\end{equation}
and $f\in \mathrm{L}^2_{\nu}.$
\end{proof}

Now consider a general submoment sequence $\st$ given by $\tilde s_k=s_{k+\ell_k}$ for an appropriate $\{\ell_k\} \subseteq \N_0$. It was shown in the previous section that for any $k\in \N_0$, $$\ell_{k}=kd+\ell_0, \text{ where } d \in \N_0\text{ and }\ell_0\in 2\N_0,$$ $ \st $ is a positive sequence. Modifying the functional $\Phi_1$ in the proof of Theorem \ref{sigmatilde} gives the following result.

\begin{thm}\label{sigmatildegeneral}
Let $\s$ be a positive sequence and $\st$ be its subsequence given by $\tilde s_k=s_{kd+\ell_0}$. Let $\sigma$ and $\tilde\sigma$ respectively be the moment solutions of the sequences $\s$ and $\st$. If one of the moment problems is determinate, and $f(x^d)\in \mathrm{L}^2_{\sigma}$ for any $f\in \mathrm{L}^2_{\sigma}$, then
\begin{equation}\label{eqnsols2}
\int_{-\infty}^{\infty} f(x^d) x^{\ell_0} \mathrm{d} \sigma(x)
=\int_{-\infty}^{\infty} f(x) \mathrm{d} \tilde{\sigma}(x) \quad \textup{for all } f\in \mathrm{L}^2_{\sigma}\cap \mathrm{L}^2_{\tilde\sigma}.
\end{equation}
\end{thm}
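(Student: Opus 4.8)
The plan is to repeat the argument of Theorem~\ref{sigmatilde} almost verbatim, the only change being that the multiplication operator $f(x)\mapsto f(x)x^{\ell_0}$ is replaced by the composition-and-multiplication operator $f(x)\mapsto f(x^d)x^{\ell_0}$. First I would introduce the two linear functionals
$$\Phi_1(f)=\intR f(x^d)\,x^{\ell_0}\,\mathrm{d}\sigma(x)\qquad\text{and}\qquad \Phi_2(g)=\intR g(x)\,\mathrm{d}\tilde\sigma(x),$$
defined for $f\in\mathrm{L}^2_{\sigma}$ and $g\in\mathrm{L}^2_{\tilde\sigma}$, and then form $\Phi=\Phi_1-\Phi_2$ on the intersection $\mathrm{L}^2_{\sigma}\cap\mathrm{L}^2_{\tilde\sigma}$.

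Next I would verify boundedness. The hypothesis that $f(x^d)\in\mathrm{L}^2_{\sigma}$ whenever $f\in\mathrm{L}^2_{\sigma}$ is precisely what is needed to keep $\Phi_1$ finite: by Cauchy--Schwarz,
$$\big|\Phi_1(f)\big|\le\intR\big|f(x^d)\,x^{\ell_0}\big|\,\mathrm{d}\sigma(x)\le\|f(x^d)\|_{\mathrm{L}^2_{\sigma}}\sqrt{s_{2\ell_0}}<\infty,$$
while $\Phi_2$ is bounded by $\|g\|_{\mathrm{L}^2_{\tilde\sigma}}\sqrt{\tilde s_0}$ exactly as in Theorem~\ref{sigmatilde}, so that $\Phi$ is a bounded linear functional on $\mathrm{L}^2_{\sigma}\cap\mathrm{L}^2_{\tilde\sigma}$.

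The heart of the proof is showing that $\Phi$ annihilates every polynomial. For $P(x)=\sum_{k=0}^{n}a_kx^k$ one has $P(x^d)=\sum_{k=0}^{n}a_kx^{kd}$, so that
$$\Phi_1(P)=\sum_{k=0}^{n}a_k\,s_{kd+\ell_0}=\sum_{k=0}^{n}a_k\tilde s_k=\intR P(x)\,\mathrm{d}\tilde\sigma(x)=\Phi_2(P),$$
where the defining relation $\tilde s_k=s_{kd+\ell_0}$ is used in the middle equality; hence $\Phi(P)=0$. Finally, the determinacy of one of the two moment problems makes the polynomials dense in the corresponding $\mathrm{L}^2$ space, and therefore dense in $\mathrm{L}^2_{\sigma}\cap\mathrm{L}^2_{\tilde\sigma}$; by continuity (the Hahn--Banach argument of Theorem~\ref{sigmatilde}) $\Phi$ vanishes identically, which is exactly \eqref{eqnsols2}.

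I expect the boundedness step to be the genuine obstacle. In Theorem~\ref{sigmatilde} the map $f\mapsto f(x)x^{\ell_0}$ trivially sends polynomials to polynomials and keeps square-integrability under control, whereas here the substitution $x\mapsto x^d$ need not preserve $\mathrm{L}^2_{\sigma}$; this is precisely why the extra hypothesis $f(x^d)\in\mathrm{L}^2_{\sigma}$ is imposed. To upgrade the pointwise finiteness above into genuine boundedness of $\Phi_1$ as a functional, one would want an estimate $\|f(x^d)\|_{\mathrm{L}^2_{\sigma}}\le C\,\|f\|_{\mathrm{L}^2_{\sigma}}$, i.e.\ boundedness of the substitution operator $T\colon f\mapsto f(x^d)$ on $\mathrm{L}^2_{\sigma}$; this can be secured by the closed graph theorem once $T$ is known to map $\mathrm{L}^2_{\sigma}$ into itself.
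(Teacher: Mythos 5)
Your proposal is correct and follows exactly the route the paper intends: the paper gives no separate proof of Theorem~\ref{sigmatildegeneral}, saying only that it follows by ``modifying the functional $\Phi_1$ in the proof of Theorem~\ref{sigmatilde},'' which is precisely the substitution $\Phi_1(f)=\intR f(x^d)x^{\ell_0}\,\mathrm{d}\sigma(x)$ you carry out, with the identity $\Phi_1(P)=\sum a_k s_{kd+\ell_0}=\sum a_k\tilde s_k=\Phi_2(P)$ on polynomials and the same determinacy-plus-density argument. Your closing remark on upgrading finiteness of $\Phi_1$ to boundedness of the substitution operator via the closed graph theorem addresses a point the paper glosses over entirely, and is a welcome refinement rather than a deviation.
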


A precise relation between $\mathrm{L}^2$ spaces of moment solutions of the original sequence and that of one of its positive subsequences can be useful in characterizing the submoment solutions and hence approximating the missing data.

Exploring equation \eqref{eqnsols2} a little further, for $\lambda\in \C$ with $y=
\operatorname{Im} \lambda\neq 0$, 
and define $$f(x)=\frac{1}{x-\lambda}.$$
Since $|x-\lambda|\geq|y|$,
$$\int_{-\infty}^{\infty} |f(x)|^2 \mathrm{d} \sigma(x)
= \int_{-\infty}^{\infty} \frac{\mathrm{d}\sigma(x)}{|x-\lambda|^2}
\leq \int_{-\infty}^{\infty} \frac{\mathrm{d}\sigma(x)}{|y|^2}=\frac{s_0}{|y|^2}<\infty.$$
 
Therefore, $f\in \mathrm{L}^2_{\sigma}$. By the same argument, $f\in \mathrm{L}^2_{\tilde\sigma}$. Similarly, it can be shown that $f(x^d)\in \mathrm{L}^2_{\sigma}$. Then by Theorem \ref{sigmatildegeneral},
\begin{equation}\label{quotient}
\int_{-\infty}^{\infty}\frac{x^{\ell_0} \mathrm{d}\sigma(x)}{x^d-\lambda}
=\int_{-\infty}^{\infty} \frac{\mathrm{d}\tilde\sigma(x)}{x-\lambda}.
\end{equation}
Applying Nevanlinna's theorem \cite{akhiezer} to equation \eqref{quotient} yields the following result which connects a moment problem with the polynomials corresponding to its submoment problem.

\begin{thm}\label{subnevanlinna}
Let $\s$ be a positive sequence and $\st$ be its subsequence given by $\tilde s_k=s_{kd+\ell_0}$. Let $\sigma$ and $\tilde\sigma$ respectively be the moment solutions of the sequences $\s$ and $\st$. If the moment problem of $\st$ is indeterminate, then
\begin{equation}\label{eqnsubnevanlinna}
\int_{-\infty}^{\infty}\frac{x^{\ell_0} \mathrm{d}\sigma(x)}{x^d-\lambda}
=-\frac{\tilde A(\lambda)\phi(\lambda)-\tilde C(\lambda)}{\tilde B(\lambda)\phi(\lambda)-\tilde D(\lambda)},
\end{equation}
where $\tilde A(\lambda), \tilde B(\lambda), \tilde C(\lambda), \tilde D(\lambda)$ form a Nevanlinna matrix of the submoment problem, and $\phi\in N.$
\end{thm}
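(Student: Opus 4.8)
The plan is to read \eqref{eqnsubnevanlinna} off directly from the Cauchy--Stieltjes transform identity \eqref{quotient} by substituting Nevanlinna's parametrization of the solution set of the indeterminate submoment problem. The right-hand side of \eqref{quotient}, namely $\intR \frac{\mathrm{d}\tilde\sigma(x)}{x-\lambda}$, is exactly the Cauchy transform of a solution $\tilde\sigma$ of the moment problem for $\st$; since that problem is assumed indeterminate, Nevanlinna's theorem furnishes a parametrized expression for this transform, and the content of the theorem is then just to transcribe one side of \eqref{quotient} into that form.

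First I would make sure that the measure $\tilde\sigma$ appearing in \eqref{quotient} is genuinely a solution of the submoment problem, so that the hypotheses of Nevanlinna's theorem are met. Here some care is needed, because \eqref{quotient} was obtained from \eqref{eqnsols2}, whose derivation in Theorem \ref{sigmatildegeneral} invoked determinacy of one of the two problems, whereas now $\st$ is indeterminate. I would sidestep this by establishing \eqref{quotient} for the concrete solution given by the pushforward of $x^{\ell_0}\,\mathrm{d}\sigma$ under the map $x\mapsto x^d$. Because $\ell_0\in 2\N_0$, the measure $\mathrm{d}\rho:=x^{\ell_0}\,\mathrm{d}\sigma$ is nonnegative, and the change-of-variables formula applied to $t\mapsto t^k$ gives $\int t^k\,\mathrm{d}\tilde\sigma_0(t)=\intR x^{kd+\ell_0}\,\mathrm{d}\sigma(x)=\tilde s_k$, so the pushforward $\tilde\sigma_0$ solves the submoment problem; applying the same formula to $t\mapsto 1/(t-\lambda)$ recovers \eqref{quotient} with no determinacy assumption.

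Next I would invoke Nevanlinna's theorem (see \cite{akhiezer}) for the indeterminate submoment problem: its four entire functions $\tilde A,\tilde B,\tilde C,\tilde D$, assembled from the orthonormal and second-kind polynomials of $\st$, form the Nevanlinna matrix, and the map
$$\phi \longmapsto -\frac{\tilde A(\lambda)\phi(\lambda)-\tilde C(\lambda)}{\tilde B(\lambda)\phi(\lambda)-\tilde D(\lambda)}$$
is a bijection from the Nevanlinna class $N$ (together with the constant $\infty$) onto the family of Cauchy transforms of the solutions of the submoment problem. Hence there exists $\phi\in N$ with $\intR\frac{\mathrm{d}\tilde\sigma(x)}{x-\lambda}$ equal to this expression, and substituting into \eqref{quotient} yields \eqref{eqnsubnevanlinna}.

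The main obstacle I anticipate is not the algebra but the bookkeeping at the interface of the two moment problems: confirming that the $\tilde\sigma$ of \eqref{quotient} is an admissible argument for Nevanlinna's parametrization, and that the matrix entries $\tilde A,\tilde B,\tilde C,\tilde D$ are those attached to the subsequence $\st$ rather than to $\s$. Once \eqref{quotient} and Nevanlinna's theorem are both in hand, the conclusion is a single substitution.
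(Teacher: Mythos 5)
Your proposal follows the same route as the paper: the paper's entire derivation consists of establishing \eqref{quotient} by taking $f(x)=1/(x-\lambda)$ in Theorem \ref{sigmatildegeneral} and then ``applying Nevanlinna's theorem to equation \eqref{quotient}''; your final substitution step is exactly that. The genuine difference is in how you justify \eqref{quotient}. The paper gets it from Theorem \ref{sigmatildegeneral}, which assumes one of the two moment problems is determinate, and this sits awkwardly with the hypothesis of Theorem \ref{subnevanlinna} that $\st$ is indeterminate (the earlier interlacing result only gives that determinacy of $\st$ forces determinacy of $\s$, so nothing prevents both problems from being indeterminate, in which case Theorem \ref{sigmatildegeneral} is unavailable). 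Your fix --- taking $\tilde\sigma$ to be the pushforward of the nonnegative measure $x^{\ell_0}\,\mathrm{d}\sigma$ under $x\mapsto x^d$, so that $\int t^k\,\mathrm{d}\tilde\sigma(t)=\tilde s_k$ and \eqref{quotient} becomes a change-of-variables identity requiring no determinacy at all --- closes that gap and is the more robust argument. The only small caution is at the very end: Nevanlinna's parametrization runs over $N$ together with the real constants and $\infty$, so the particular solution you construct is guaranteed a parameter $\phi$ in that extended class rather than in $N$ proper; this is a looseness already present in the statement of the theorem, and you flag it correctly.
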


For the shifted subsequence in Theorem \ref{sigmatilde}, we want to investigate determinacy of its moment problem of a special submoment sequence in relation to determinacy of the original moment problem. Recall that the limit circles $K_{\infty}(\lambda)$ ($\lambda \in \C$ and $\operatorname{Im}\lambda\neq0$) provide a fundamental concept for studying determinacy of moment problems.

\begin{thm}\label{circpoints123}
For the same moment sequences and conditions as in Theorem \ref{sigmatilde}, for $\lambda \in \C, \operatorname{Im}\lambda\neq 0$ the following holds.
\begin{equation}\label{circlepoints}
\int_{-\infty}^{\infty} \frac{\mathrm{d} \tilde\sigma(x)}{x-\lambda}
= C+\lambda^{\ell_0}\int_{-\infty}^{\infty} \frac{\mathrm{d}\sigma(x)}{x-\lambda},
\end{equation}
where $C$ is a constant depending on $\lambda$.
\end{thm}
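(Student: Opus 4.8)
The plan is to reduce the statement to the functional identity already established in Theorem \ref{sigmatilde} and then perform a single polynomial division. First I would recall that, under the stated determinacy hypothesis, Theorem \ref{sigmatilde} gives
$$\int_{-\infty}^{\infty} f(x)\, x^{\ell_0}\,\mathrm{d}\sigma(x) = \int_{-\infty}^{\infty} f(x)\,\mathrm{d}\tilde{\sigma}(x) \quad \textup{for all } f\in \mathrm{L}^2_{\sigma}\cap \mathrm{L}^2_{\tilde\sigma}.$$
The whole argument hinges on choosing the right test function: I would take $f(x) = 1/(x-\lambda)$ for $\lambda\in\C$ with $\operatorname{Im}\lambda\neq 0$, which lies in $\mathrm{L}^2_{\sigma}\cap \mathrm{L}^2_{\tilde\sigma}$ — exactly the verification carried out in the discussion preceding Theorem \ref{subnevanlinna}, using $|x-\lambda|\geq|\operatorname{Im}\lambda|$ together with the finiteness of $s_0$. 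Substituting this $f$ immediately yields
$$\int_{-\infty}^{\infty}\frac{\mathrm{d}\tilde\sigma(x)}{x-\lambda} = \int_{-\infty}^{\infty}\frac{x^{\ell_0}}{x-\lambda}\,\mathrm{d}\sigma(x),$$
so that the remaining task is purely algebraic.

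Next I would split the integrand on the right using the elementary factorization
$$\frac{x^{\ell_0}}{x-\lambda} = \frac{x^{\ell_0}-\lambda^{\ell_0}}{x-\lambda} + \frac{\lambda^{\ell_0}}{x-\lambda} = \sum_{k=0}^{\ell_0-1}\lambda^{\ell_0-1-k}x^k + \frac{\lambda^{\ell_0}}{x-\lambda}.$$
The first term is a polynomial in $x$ of degree $\ell_0-1$, whose integral against $\sigma$ is the finite quantity $C:=\sum_{k=0}^{\ell_0-1}\lambda^{\ell_0-1-k}s_k$ — a polynomial in $\lambda$ whose coefficients are the first $\ell_0$ moments, hence a constant depending only on $\lambda$. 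Because $1/(x-\lambda)\in\mathrm{L}^2_{\sigma}\subseteq\mathrm{L}^1_{\sigma}$ (the measure $\sigma$ is finite) and the polynomial part is $\sigma$-integrable, the integral splits by linearity, and the second term contributes $\lambda^{\ell_0}\int_{-\infty}^{\infty}\mathrm{d}\sigma(x)/(x-\lambda)$.

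Assembling these gives precisely \eqref{circlepoints} with the constant $C$ exhibited explicitly above. There is no serious obstacle here: the analytic content — both the validity of the functional identity and the membership of $1/(x-\lambda)$ in the relevant $\mathrm{L}^2$ spaces — has already been discharged earlier in the section, and the evenness of $\ell_0$ is inherited from the hypotheses of Theorem \ref{sigmatilde}, where it is exactly what guarantees that $\tilde\sigma$ is a genuine moment solution. The only point demanding a line of care is the justification of the term-by-term integration of the split integrand, which follows from the separate integrability of each piece.
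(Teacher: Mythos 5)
Your proposal is correct and is essentially the argument the paper intends: the source omits a written proof of Theorem \ref{circpoints123}, but the discussion preceding Theorem \ref{subnevanlinna} (membership of $1/(x-\lambda)$ in $\mathrm{L}^2_{\sigma}\cap \mathrm{L}^2_{\tilde\sigma}$ and equation \eqref{quotient} with $d=1$) sets up exactly your substitution, after which the polynomial division $x^{\ell_0}/(x-\lambda)=\sum_{k=0}^{\ell_0-1}\lambda^{\ell_0-1-k}x^k+\lambda^{\ell_0}/(x-\lambda)$ yields \eqref{circlepoints} with $C=\sum_{k=0}^{\ell_0-1}\lambda^{\ell_0-1-k}s_k$. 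Your justification of the term-by-term integration (finiteness of the moments $s_0,\dots,s_{\ell_0-1}$ and boundedness of $1/(x-\lambda)$ against the finite measure $\sigma$) is the right point of care and is handled adequately.
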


Let $$w_{\sigma}(\lambda)
= \int_{-\infty}^{\infty} \frac{\mathrm{d}\sigma(x)}{x-\lambda}
\quad \text{and  } 
w_{\tilde\sigma}(\lambda)
= \int_{-\infty}^{\infty}\frac{\mathrm{d}\tilde{\sigma}(x)}{x-\lambda}.$$
Then Theorem \ref{circpoints123} states that
\begin{equation}\label{circrelation}
w_{\tilde\sigma}(\lambda)
= C+\lambda^{\ell_0}w_{\sigma}(\lambda)
\end{equation}
for $ \s$ and $ \st $ specified in that theorem. Recall that points $w_{\sigma}$ and $w_{\tilde\sigma}$ lie on the circumferences of the circles $K_{\infty}(\lambda)$ and $\tilde K_{\infty}(\lambda)$ corresponding to the moment problems of $\s$ and $\st$ respectively. Equation \eqref{circrelation} precisely describes the distortion of the circle corresponding to the original moment sequence in relation to its tail.

\section{Perturbation or Modification of Moment Sequences}

Assume that $s_0>|s_k|$ for all $k\in \N$. Then we can have a wide range of choices for $s_0$ keeping the rest of terms constant and maintaining positivity. An important application of modification in $s_0$ is that it can lead to a determinate solution. It was proved by Stieltjes in his 1894-Memoir \cite{stieltjes} that a determinate moment solution can be obtained from an indeterminate one with a modification in $s_0$. The moment sequence $s_n=q^{\frac{-(n+1)^2}{2}}$ gives the Stieltjes-Wigert polynomials $P_n(x;q)$, which are orthogonal in a log-normal distribution, known to be indeterminate.
The modified sequence $\st$ defined as $$\tilde s_0=s_0-\frac{1}{\sum_{n=0}^{\infty}[P_n(0;q)]^2}$$
and $\tilde s_n=s_n$ for all $n\geq 1$ has a determinate moment solution $\tilde\sigma(x)$ given by 
$$\tilde\sigma(x)=\sum_{x\in U}c_x\delta_x,$$
where $U$ is the zero set of the reproducing kernel 
$$K(0,w)=\sum_{n=0}^{\infty}P_n(0)P_n(w)$$ 
and 
$$c_x=\frac{1}{\sum_{k=0}^{\infty}[P_k(x;q)]^2}, \ \ x\in U.$$ 
An example of an application of modified moments to harmonic solids is given in \cite{blumstein}. 

To study the stability of perturbation of moment sequences with arbitrary sequences, the following well known lemma is useful \cite{book:widder}.
\begin{lem}\label{lem:signedmeasure}
Let $ \ts $ be an arbitrary sequence of real numbers. Then there exists a signed measure $ \mu $ of bounded variation such that $$ \int_{-\infty}^\infty x^k d\mu(x) = t_k , \ \ k=0,1,2,\cdots$$
\end{lem}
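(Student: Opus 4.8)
The plan is to produce $\mu$ as an absolutely continuous signed measure $\mathrm{d}\mu = f(x)\,\mathrm{d}x$ whose density is an infinite superposition $f=\sum_{n=0}^\infty t_n\nu_n$ of fixed functions $\nu_n$ that are biorthogonal to the monomials, meaning $\intR x^k\nu_n(x)\,\mathrm{d}x=\delta_{nk}$ for every $k\in\N_0$. Granting such a family, the partial sums $f_N=\sum_{n=0}^N t_n\nu_n$ already reproduce the data exactly: $\intR x^k f_N(x)\,\mathrm{d}x=\sum_{n=0}^N t_n\delta_{nk}=t_k$ whenever $N\ge k$. The entire burden of the proof is therefore to choose the $\nu_n$ so that (i) the series converges in $\mathrm{L}^1(\R)$, which makes $\mu$ a genuine signed measure of bounded variation with $\|\mu\|=\|f\|_{\mathrm{L}^1}<\infty$, and (ii) the tail $\intR x^k(f-f_N)\,\mathrm{d}x$ vanishes for $N\ge k$, which upgrades the exact identity for $f_N$ to $\intR x^k\,\mathrm{d}\mu=t_k$.

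For the biorthogonal family I would work on the Fourier side. Fix $\chi\in C^\infty_c(\R)$ with $\chi\equiv1$ on a neighborhood of the origin, and define $\nu_n$ through $\widehat{\nu_n}(\xi)=\frac{(-i\xi)^n}{n!}\chi(\xi)$. Since $\widehat{\nu_n}$ is smooth with compact support, $\nu_n$ is a Schwartz function, so all the weighted integrals $\intR|x|^k|\nu_n(x)|\,\mathrm{d}x$ are finite. The moments of $\nu_n$ are, up to known constants, the derivatives of $\widehat{\nu_n}$ at $\xi=0$; because $\chi\equiv1$ there, $\widehat{\nu_n}$ coincides with $\frac{(-i\xi)^n}{n!}$ to infinite order at the origin, and differentiating gives $\intR x^k\nu_n(x)\,\mathrm{d}x=\delta_{nk}$ for every $k$, with no growth hypothesis on the data. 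I would then build in a scaling parameter by replacing $\nu_n$ with $\lambda_n^{\,n+1}\nu_n(\lambda_n\,\cdot)$; a change of variables shows this preserves the biorthogonality for any $\lambda_n>0$ while rescaling the weighted norms according to $\intR|x|^k\,|\lambda_n^{n+1}\nu_n(\lambda_n x)|\,\mathrm{d}x=\lambda_n^{\,n-k}\intR|y|^k|\nu_n(y)|\,\mathrm{d}y$.

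The convergence in (i)--(ii) is the main obstacle, since the coefficients are pinned to the prescribed $t_n$ and may grow arbitrarily fast. The saving observation is that only the tail indices $n>k$ enter the sums to be controlled, and there the exponent $n-k\ge1$ is strictly positive, so a small scale $\lambda_n$ works in our favour; the least favourable case inside the tail is $k=n-1$, carrying the single power $\lambda_n$. Using a bound of the form $\intR|y|^k|\nu_n(y)|\,\mathrm{d}y\le C_k R^n/n!$ (finite frequency support plus the factor $1/n!$ force super-exponential decay in $n$ for each fixed $k$), I would choose the $\lambda_n\downarrow0$ small enough that $|t_n|\,\lambda_n R^n/n!\le 2^{-n}$ for all $n$. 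This single condition dominates every tail $\sum_{n>k}|t_n|\lambda_n^{\,n-k}\intR|y|^k|\nu_n|\,\mathrm{d}y$ by a constant multiple of $\sum_{n>k}2^{-n}$, simultaneously for all $k$; the case $k=0$ gives $f\in\mathrm{L}^1(\R)$ and hence the bounded-variation conclusion, while for general $k$ absolute summability justifies interchanging sum and integral, forcing $\intR x^k(f-f_N)\,\mathrm{d}x=0$ for $N\ge k$ and thus $\intR x^k\,\mathrm{d}\mu=t_k$. The delicate bookkeeping is exactly this: arranging one sequence of scales that tames the arbitrary growth of $t_n$ uniformly in $k$, for which the positivity of the exponent $n-k$ on the tail is essential. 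A purely real-variable substitute, taking $\nu_n$ proportional to rescaled derivatives $\phi^{(n)}$ of a single bump $\phi$ and solving the resulting triangular moment system recursively for the coefficients, is available but less clean, since those functions are biorthogonal to $x^k$ only for $k\le n$ and the higher moments must be absorbed separately.
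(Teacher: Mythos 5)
Your construction is correct, and it is worth noting at the outset that the paper does not prove this lemma at all: it is stated as a ``well known lemma'' with a citation to Widder's \emph{The Laplace Transform} (the result goes back to Boas and P\'olya, that every real sequence is the moment sequence of a function of bounded variation). So any proof you give is necessarily a different route from the paper's. The classical arguments cited there are real-variable and essentially recursive: one builds a function of bounded variation (or a discrete signed measure) step by step, matching one moment at a time and controlling the total variation by a gliding-hump/smallness choice at each stage. Your proof replaces that recursion with a single explicit biorthogonal system on the Fourier side, $\widehat{\nu_n}(\xi)=\tfrac{(-i\xi)^n}{n!}\chi(\xi)$, which kills the triangularity issue entirely because $\intR x^k\nu_n(x)\,\mathrm{d}x=\delta_{nk}$ for \emph{all} $k$, not just $k\le n$; the scales $\lambda_n$ then play exactly the role of the classical smallness choices, and your bound $\intR|y|^k|\nu_n(y)|\,\mathrm{d}y\le C_kR^n/n!$ (valid since $\widehat{\nu_n}$ is supported in a fixed compact set and carries the factor $1/n!$) is what makes one sequence of scales work uniformly in $k$. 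I verified the key computations: the moment identity $\intR x^k\nu_n\,\mathrm{d}x=i^k\widehat{\nu_n}^{(k)}(0)=\delta_{nk}$, the scaling identity $\intR|x|^k|\lambda_n^{n+1}\nu_n(\lambda_nx)|\,\mathrm{d}x=\lambda_n^{n-k}\intR|y|^k|\nu_n|\,\mathrm{d}y$, and the tail estimate all check out, and absolute summability legitimizes the interchange of sum and integral. Two cosmetic points you should add: take $\chi$ real and even (or pass to $\operatorname{Re}\nu_n$, which satisfies the same biorthogonality since $\delta_{nk}$ is real) so that $\mu$ is a genuinely real signed measure, and state explicitly that $\lambda_n\le1$ so that $\lambda_n^{n-k}\le\lambda_n$ on the tail $n>k$. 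What your approach buys is a clean, non-recursive, absolutely continuous representing measure with Schwartz density; what the classical approach buys is elementarity (no Fourier analysis) and, in Boas's form, a representing measure supported on $[0,\infty)$, i.e.\ a Stieltjes rather than merely Hamburger representation.
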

Let $ \s $ be a positive sequence, and $ \sigma $ be a corresponding positive measure generating it.  By the Hahn-Jordan theorem, since a signed measure of bounded variation $ \mu $ can be decomposed into a difference of two nonnegative measures $ \mu_1 $ and $ \mu_2 $, $ \mu= \mu_1 - \mu_2 $, perturbation of the moment sequence $ \s $ by the sequence $ \ts $ is a moment sequence if and only if $ \sigma - \mu_2 $ is a  positive measure, i.e.  $ \mu_2 (E) \leq \sigma (E) $ for every $ \sigma $-measurable set $ E $.  Since this readily implies that $ \mu_2 $ is absolutely continuous with respect to $ \sigma $, letting $ f \in \mathrm{L}^1 (d\sigma) $ denote the Radon-Nikodym derivative of $ \mu_2 $ with respect to $ \sigma $, the above holds if and only if $ |f| \leq 1 $ $ \sigma$-a.e. Since $ \mu_2 $ is also a nonnegative measure, $ 0 \leq f \leq 1 $. We shall say that a signed measure $ \mu $ is {\it dominated} by $ \sigma $, if there exists an $ f \in L^1(\sigma) $, $ 0 \leq f \leq 1 $, such that the negative part of $ \mu $, $ \mu_2 $, satisfies $ d\mu_2 = f d\sigma $.
\begin{thm}\label{perturbation}
Let $ \s $ be a moment sequence and $ \ts $ be an arbitrary sequence for which the signed measure generating it is dominated by $ \sigma $.  Then 
$\{s_k+t_k\}_{k \in \N0} $ is a moment sequence.
\end{thm}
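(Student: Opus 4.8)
The plan is to produce a single nonnegative measure that represents the perturbed sequence, after which positivity is immediate from the integral representation \eqref{Hamburger_moment}. First I would collect the two representations already available: since $\s$ is a moment sequence there is a nonnegative measure (equivalently, a nondecreasing $\sigma$) with $s_k=\intR x^k\,d\sigma(x)$, and by Lemma~\ref{lem:signedmeasure} the arbitrary sequence $\ts$ is generated by a signed measure $\mu$ of bounded variation, $t_k=\intR x^k\,d\mu(x)$. Writing the Hahn--Jordan decomposition $\mu=\mu_1-\mu_2$ into mutually singular nonnegative measures, the domination hypothesis furnishes $f\in \mathrm{L}^1(\sigma)$ with $0\le f\le 1$ and $d\mu_2=f\,d\sigma$. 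I then set $\tau:=\sigma+\mu_1-\mu_2$ and aim to show that $\tau$ is a nonnegative measure whose $k$-th moment is $s_k+t_k$.

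The decisive step is checking $\tau\ge 0$. Regrouping as $\tau=(\sigma-\mu_2)+\mu_1$ and substituting $d\mu_2=f\,d\sigma$ gives $d(\sigma-\mu_2)=(1-f)\,d\sigma$, which is nonnegative exactly because $f\le 1$; since $\mu_1\ge 0$, the sum $\tau$ is a nonnegative measure. This is the sole place the domination hypothesis enters, and it is precisely the condition singled out in the discussion preceding the statement as equivalent to $\sigma-\mu_2$ being a positive measure.

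It remains to confirm that $\tau$ genuinely represents $\{s_k+t_k\}$, i.e. that every $\intR x^k\,d\tau(x)$ is finite and equals $s_k+t_k$; I expect this bookkeeping, rather than the positivity step, to be the main (if modest) obstacle, since the additive splitting of an integral across a Hahn--Jordan decomposition is only legitimate once each piece is known to have finite moments. Finiteness of all absolute moments of $\sigma$ follows from $\int|x|^{2m+1}\,d\sigma\le\tfrac12(s_{2m}+s_{2m+2})<\infty$, and then $\int|x|^k f\,d\sigma\le\int|x|^k\,d\sigma<\infty$ shows $\mu_2$ has finite moments of every order; since $\mu$ does as well, so does $\mu_1=\mu_2+\mu$. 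With every piece finitely integrable against $x^k$, the integral splits additively to give $\intR x^k\,d\tau = s_k + t_k$, so by \eqref{Hamburger_moment} the sequence $\{s_k+t_k\}_{k\in\N_0}$ is a moment sequence, as claimed.
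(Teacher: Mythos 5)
Your proposal is correct and follows essentially the same route as the paper, which proves the theorem in the discussion immediately preceding its statement: decompose $\mu=\mu_1-\mu_2$, use domination to get $d(\sigma-\mu_2)=(1-f)\,d\sigma\ge 0$, and represent $\{s_k+t_k\}$ by the nonnegative measure $\sigma+\mu_1-\mu_2$. Your explicit verification that all pieces have finite moments (via $\int|x|^{2m+1}\,d\sigma\le\tfrac12(s_{2m}+s_{2m+2})$) is a detail the paper leaves implicit, but it is the same argument.
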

Clearly if $ \mu $ is dominated by $ \sigma $, then $ \varepsilon \mu $ is also dominated by $ \sigma $, for all $ 0 \leq \varepsilon \leq 1 $. 

While Theorem \ref{perturbation} gives a condition for the perturbation of a moment sequence $ \s $ by an arbitrary sequence $ \ts $ so that $ \{s_k + \varepsilon t_k\}_{k \in \N_0} $ is a moment sequence for $ 0 \leq \varepsilon \leq 1 $, this condition is in terms of the signed measure generating the sequence 
$\ts$ and can be quite difficult to verify in practice.  

For a truncated moment sequence, perturbations prescribed by Theorem \ref{perturbation} are relatively easy to describe. Recall that by the Tchakaloff's theorem \cite{putinar-PAMS}, if $ \s $ is a truncated moment sequence, then there exist $ \{p_1, \cdots , p_m\} \subset \R $ and $ c_i > 0 $, such that 
\begin{equation}\label{pointmass}
\sigma = \sum_{i=1}^m c_i \delta_{p_i} ,
\end{equation}
where $ \delta_{p_i} $ is an unit point masse at $ p_i $. Then it holds the following statement.

\begin{cor}\label{truncatedperturb}
Let $ \s $ be a truncated moment sequence generated by $ \sigma $ with the form \eqref{pointmass} and $ \ts $ be an arbitrary real sequence generated by $ \mu= \mu_1 - \mu_2 $.  Then $ \{s_k+t_k\}_{k \in \N_0 } $ is a moment sequence if and only if the support of $ \mu_2 $ is in the support of $ \sigma $ and if $ d_j \geq 0 $ is the weight of the point masses of $ \mu_2 $ at $ p_j $, then $ d_j \leq c_j $.
\end{cor}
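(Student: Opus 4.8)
The plan is to read the corollary off directly from the measure-theoretic criterion established in the discussion preceding Theorem \ref{perturbation}: with $\s$ a moment sequence generated by $\sigma$ and $\ts$ generated by $\mu=\mu_1-\mu_2$, the perturbed sequence $\{s_k+t_k\}_{k\in\N_0}$ is a moment sequence if and only if $\sigma-\mu_2$ is a positive measure, i.e. $\mu_2(E)\le\sigma(E)$ for every $\sigma$-measurable set $E$ (equivalently, $\mu_2$ is dominated by $\sigma$). Thus the entire content of the corollary is the translation of this abstract domination condition $\mu_2\le\sigma$ into the concrete atomic statement, using the special form $\sigma=\sum_{i=1}^m c_i\delta_{p_i}$ supplied by Tchakaloff's theorem.

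First I would pin down the structure forced on $\mu_2$. As noted in that same discussion, domination forces $\mu_2\ll\sigma$. Since $\sigma$ is purely atomic with support $S=\{p_1,\dots,p_m\}$, every set disjoint from $S$ is $\sigma$-null and hence $\mu_2$-null, so $\mu_2$ is concentrated on the finite set $S$ and therefore has the form $\mu_2=\sum_{j=1}^m d_j\delta_{p_j}$ with weights $d_j\ge 0$. In particular the support inclusion $\operatorname{supp}\mu_2\subseteq\operatorname{supp}\sigma$ is precisely the statement that $\mu_2$ has atoms only among the $p_j$.

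Next I would prove the two implications. For necessity, assume $\{s_k+t_k\}$ is a moment sequence, so $\mu_2\le\sigma$. Testing the inequality on sets $E\subseteq S^c$ gives $\mu_2(S^c)=0$, which is the support inclusion, and testing it on the singletons $E=\{p_j\}$ gives $d_j=\mu_2(\{p_j\})\le\sigma(\{p_j\})=c_j$. For sufficiency, assume $\operatorname{supp}\mu_2\subseteq S$ with $d_j\le c_j$ for each $j$; then for an arbitrary $\sigma$-measurable set $E$,
\[
\mu_2(E)=\sum_{j:\,p_j\in E}d_j\le\sum_{j:\,p_j\in E}c_j=\sigma(E),
\]
so $\mu_2$ is dominated by $\sigma$, and Theorem \ref{perturbation} yields that $\{s_k+t_k\}$ is a moment sequence.

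I do not expect a serious obstacle, since the corollary is essentially a specialization of Theorem \ref{perturbation} to the purely discrete case. The only step needing a little care is the reduction showing $\mu_2$ must be a finite combination of the point masses $\delta_{p_j}$; this is where the atomic form of $\sigma$ together with the absolute continuity $\mu_2\ll\sigma$ is genuinely used, and it is exactly what collapses the abstract domination condition into the finite list of scalar inequalities $0\le d_j\le c_j$.
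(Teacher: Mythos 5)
Your proposal is correct and follows essentially the same route as the paper: both reduce the corollary to the domination criterion $\sigma-\mu_2\ge 0$ from the discussion preceding Theorem \ref{perturbation}, use $\mu_2\ll\sigma$ together with the atomic form of $\sigma$ to force $\mu_2=\sum_j d_j\delta_{p_j}$, and then read off the inequalities $d_j\le c_j$. Your write-up is in fact somewhat more explicit than the paper's (which dismisses the converse as obvious), but the underlying argument is identical.
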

\begin{proof}
Let $ \sigma $ be an atomic measure as specified by Tchakaloff's theorem. For $ \sigma $ to dominate $ \mu_2 $, since $ \mu_2 << \sigma $, the support of $ \mu_2 $ is a subset of the support of $ \sigma $. For $ \sigma - \mu_2 $ to be a positive measure, we must have the weight of the point masses of $ \mu_2 $ at $ p_j $ must be less than $ c_j $. Hence the necessary part follows.
The converse is obvious. 
\end{proof}
 

For a non-truncated moment sequence $ \s $, letting $ \{ t_{1,k} \}_{k \in \N_0} $ and $  \{t_{2,k}\}_{k \in \N_0} $ be the two positive sequences generated by the positive and negative parts of the signed measure $ \mu $ in Lemma \ref{lem:signedmeasure}, $ t_k = t_{1,k} - t_{2,k} $. Then
$$
t_{2, 2k} = \int_{-\infty}^{\infty} x^{2k} d \mu_2 (x) = \int_{-\infty}^\infty x^{2k} f(x) d\sigma (x) \leq s_{2k}.$$ 
However, a simple condition on 
$ t_{2, 2k+1} $ to guarantee that $ \{s_k - t_{2,k} \}_{k \in \N_0} $ is a positive sequence for an arbitrary positive sequence $ \s $ eludes us at this time. 




\begin{center}
{\bf References}
\end{center}

\begin{filecontents}{refs.bib}
@book{akhiezer,
    AUTHOR = {N. I. Akhiezer},
     TITLE = {The {C}lassical {M}oment {P}roblem and {S}ome {R}elated {Q}uestions in
              {A}nalysis},
    SERIES = {Translated by N. Kemmer},
 PUBLISHER = {Hafner Publishing Co., New York},
      YEAR = {1965}
}

@article{putinar-PAMS,
    AUTHOR = {M. Putinar},
    TITLE = {A note on {T}chakaloff's theorem},
    JOURNAL = {Proc. Amer. Math. Soc.},
    FJournal = {Proceeding of the American Mathematical Society},
    VOLUME = {125},
    YEAR = {1997},
    NUMBER = {8},
    PAGES = {2409--2414}
}

@book {stieltjes,
    AUTHOR = {T. J. Stieltjes},
     TITLE = {{\OE}uvres compl\`etes/{C}ollected papers. {V}ol. {I}, {II}},
      NOTE = {Reprint of the 1914--1918 edition},
 PUBLISHER = {Springer-Verlag},
   ADDRESS = {Berlin},
      YEAR = {1993}
}

@article {beckermann,
    AUTHOR = {B. Beckermann and E. Bourreau},
     TITLE = {How to choose modified moments?},
   JOURNAL = {J. Comput. Appl. Math.},
  FJOURNAL = {Journal of Computational and Applied Mathematics},
    VOLUME = {98},
      YEAR = {1998},
    NUMBER = {1},
     PAGES = {81--98}
}

@article {gaustchi,
    AUTHOR = {W. Gautschi},
     TITLE = {On the sensitivity of orthogonal polynomials to perturbations
              in the moments},
   JOURNAL = {Numer. Math.},
  FJOURNAL = {Numerische Mathematik},
    VOLUME = {48},
      YEAR = {1986},
    NUMBER = {4},
     PAGES = {369--382}
}

@ARTICLE{sunmikim,
    AUTHOR = {S. Kim and L. Reichel},
     TITLE = {Sensitivity analysis for {S}zego polynomials},
   JOURNAL = {Numer. Math.},
  FJOURNAL = {Numerische Mathematik},
    VOLUME = {113},
      YEAR = {2009},
    NUMBER = {2},
     PAGES = {265--279}
}

@ARTICLE{blumstein,
  author = {B. Carl and J. C. Wheeler},
  year = {1973},
  title = {Modified-Moments Method: Applications to Harmonic Solids},
  journal = {Phys. Rev. B.},
  volume={8},
  number={4},
  pages={1764-1776}
}

@BOOK{BCR,
  author = {C. Berg, J. P. R. Christensen, and P. Ressel},
  year = 1984,
  title = {Harmonic Analysis on Semigroups},
  publisher = {Springer-Verlag, New York}
}

@article{BCI02,
       author={C. Berg, Y. Chen, M.E.H. Ismail},
       title={Small eigenvalues of large {H}ankel matrices: the indeterminate case},
       journal={Math. Scand.},
       volume={91},
       year = 2002,
       number={1},
       pages = {67--81}
}

@article{BCI02,
       author={C. Berg, Y. Chen, and M.E.H. Ismail},
       title={Small eigenvalues of large {H}ankel matrices: the indeterminate case},
       journal={Math. Scand.},
       volume={91},
       year = 2002,
       number={1},
       pages = {67--81}
}

@article{BS15,
       author={C. Berg and R. Szwarc},
       title={A determinant characterization of moment sequences with finitely many mass points},
       journal={Linear Multilinear Algebra},
       volume={63},
       year = 2015,
       number={8},
       pages = {1568--1576}
}

@article{CJ15,
       author={H. Choi and F. Jafari},
       title={Positive definite {H}ankel matrix completions and {H}amburger moment completions},
       journal={Linear Algebra Appl.},
       year = {revised and resubmitted in 2015},
       note={\url{http://arxiv.org/abs/1509.06296}}

}

@BOOK{book:widder,
  author = {D. V. Widder},
  year = 1941,
  title = {The Laplace Transform},
  publisher = {Princeton University Press},
  address = {Princeton, NJ}
}

@article{SS,
         author={J. Stochel and F. H. Szafraniec},
         title={The complex moment problem and subnormality: A polar decomposition approach},
         journal={J. Functional Analysis},
         volume={159},
         year=1998,
         number={},
         pages={432-491}
}

\end{filecontents}

\bibliographystyle{plain}
\bibliography{refs}

\end{document}